\renewcommand{\L}{\mathcal{L}}
\newcommand{\I}{\mathcal{I}}
\newcommand{\F}{\mathcal{F}}
\newcommand{\J}{\mathcal{J}}
\newcommand{\V}{\mathcal{V}}
\renewcommand{\H}{\mathcal{H}}
\newcommand{\G}{\mathcal{G}}
\newcommand{\bigomega}{\hbox{\large $\omega$}}
\renewcommand{\O}{\mathcal{O}}
\newcommand{\M}{\mathcal{M}}
\newcommand{\N}{\mathcal{N}}
\newcommand{\ox}{\otimes}
\renewcommand{\:}{\colon}
\newcommand{\w}{\bigomega}
\newcommand{\lra}{\longrightarrow}
\newcommand{\mult}{\text{\rm mult}}
\newcommand{\risom}{\stackrel{\sim}{\to}}
\def\cocoa{{\hbox{\it C\kern-.13em o\kern-.07em C\kern-.13em o\kern-.15em A}}}
\newtheorem{theorem}{Theorem}[section]
\newtheorem{lemma}[theorem]{Lemma}
\newtheorem{proposition}[theorem]{Proposition}
\theoremstyle{definition}
\newtheorem{remark}[theorem]{Remark}
\newtheorem{example}[theorem]{Example}
\newtheorem{subsct}[theorem]{}
\theoremstyle{plain}
\font\smallrm=cmr8
\begin{document}

\author[{\smallrm Eduardo Esteves and Patr\'\i cia Nogueira}]
{Eduardo Esteves and Patr\'\i cia Nogueira}

\thanks{First author supported by CNPq, Proc.~303797/2007-0 and 
473032/2008-2, and FAPERJ, Proc.~E-26.102.769/2008.}

\title[{\smallrm Generalized linear systems and their Weierstrass points}]
{Generalized linear systems on curves\\ and their Weierstrass points}

\begin{abstract} 
Let $C$ be a projective Gorenstein curve over an 
algebraically closed field of characteristic 0. A generalized linear system on 
$C$ is a pair $(\I,\epsilon)$ consisting of a torsion-free, rank-1 sheaf $\I$ on 
$C$ and a map of vector spaces $\epsilon\:V\to\Gamma(C,\I)$. 
If the system is nondegenerate on every irreducible
component of $C$, we associate to it a 0-cycle $W$, its \emph{Weierstrass cycle}. 
Then we show that for each one-parameter family of curves $C_t$ degenerating to 
$C$, and each family of linear systems $(\L_t,\epsilon_t)$ 
along $C_t$, with $\L_t$ invertible, degenerating to $(\I,\epsilon)$, 
the corresponding Weierstrass 
divisors degenerate to a subscheme whose associated 0-cycle is $W$. We show that 
the limit subscheme contains always an ``intrinsic'' subscheme, canonically 
associated to $(\I,\epsilon)$, but the limit itself depends on the family 
$\L_t$.
\end{abstract}

\maketitle

\section{Introduction}\label{intro}

The study of linear systems on complete, 
smooth curves is the study of the projective 
geometry of those curves, and thus has a long history. 
Notably, Severi attempted to prove what is known today as the 
Brill--Noether theorem in his book \cite{Se}, Anhang G, Sect.~8, pp.~380--390; 
see \cite{HM}, Chapter 5 for an account.

The Brill--Noether theorem is a statement about linear 
systems on general smooth curves. 
Roughly speaking, Severi hoped to prove it by degenerating smooth curves to general 
irreducible, rational, nodal curves, keeping 
track of what happens to linear systems under 
such degeneration. 

Severi did not succeed, but his ideas and efforts led to important 
developments in the second half of last century. 
There are two difficulties with Severi's argument. 
First, line bundles do not necessarily degenerate 
to line bundles. But Kleiman \cite{Kl} 
observed that they do degenerate to torsion-free, 
rank-1 sheaves, which he used to overcome 
this first problem, and give a proof of the theorem for linear systems of 
rank 1. The second problem became a conjecture about secant lines of a 
rational normal curve in \cite{Kl}, which, according 
to \cite{HM}, p.~243, remains as a whole undecided. 
But Griffiths and Harris \cite{GH} bypassed the conjecture by degenerating 
even further, thereby completing the proof of the theorem. 

Later on, Eisenbud and Harris \cite{EH1} 
simplified the argument by considering degenerations to 
irreducible, rational, cuspidal curves. Rather than restricting themselves to 
irreducible curves, they observed that, by means of the semistable reduction of 
these degenerations, the irreducible, rational curves could be replaced by special 
curves of compact type, called flag curves. Studying degenerations of 
linear systems, called limit linear series, to curves of compact type, 
Eisenbud and Harris were able to 
discover many other interesting results; see \cite{EH2}.

Central to the study of linear systems is the study of their Weierstrass 
(or ramification) points. 
These are points of the curve at which at least one divisor (or nonzero section) 
of the linear system vanishes with multiplicity higher than the rank of the system. 
In Eisenbud's and Harris's 
theory of limit linear series, degenerations of Weierstrass points play a 
significant role. The so-called refined limit linear series are essentially those 
for which singular points are not limits of Weierstrass points; see \cite{EH2}, 
Prop.~2.5, p.~350.

In contrast, singular points of irreducible curves, at least those with 
Gorenstein singularities, are always limits of Weierstrass points. Major work 
was done by Widland, Lax and Garcia (\cite{GL}, \cite{Lax1}, \cite{Lax2}, 
\cite{LW}, \cite{W}) in the 1980's and early 1990's to define and study Weierstrass 
points of linear systems on such curves. In particular, Lax showed that these 
Weierstrass points are the limits of the Weierstrass points of linear systems 
on smooth curves degenerating to the given linear system on the singular curve; 
see \cite{Lax2}, Prop.~2, p.~9.

However, as observed earlier, linear systems do not necessarily degenerate to 
linear systems, but rather to vector spaces of sections of torsion-free, rank-1 
sheaves, which we call generalized linear systems; see Subsection \ref{tfrk1}. 
Essentially, generalized linear systems appeared before. For instance, they 
appeared in the theory of ``$r$-special'' subschemes developed in \cite{Kl} and 
in the theory of ``generalized divisors'' developed in \cite{Hart2}. However, to 
our knowledge, Weierstrass points for these systems have never been defined nor 
studied.
 
We fill this gap in the literature as follows. 
Let $C$ be a projective, possibly reducible, 
Gorenstein curve over an algebraically closed 
field of characteristic zero. Let $\I$ be a torsion-free, rank-1 sheaf on $C$ 
and $\epsilon\: V\to\Gamma(C,I)$ a nonzero, injective map of vector spaces. 
We say that $(\I,\epsilon)$ is a nondegenerate generalized linear system. 

Assume that $(\I,\epsilon)$ is strongly nondegenerate, that is, that 
$\epsilon(v)$ is generically nonzero on every irreducible component of $C$ 
for every nonzero $v\in V$; see Subsection \ref{tfrk1}. 
We associate to $(\I,\epsilon)$ a subscheme $Z(\I,\epsilon)$ and a 0-cycle 
$R(\I,\epsilon)$  of $C$; see Subsections \ref{singramcyc} and 
\ref{jetsintr}. We call the first the intrinsic 
Weierstrass scheme and the second the Weierstrass cycle of $(\I,\epsilon)$. 

The 0-cycle $R(\I,\epsilon)$ can be computed by adding certain contributions at the 
singular points of $C$ to the 0-cycle associated to the Weierstrass divisor 
of the linear system induced by $(\I,\epsilon)$ on the normalization of $C$. This 
is a consequence of our Theorem \ref{rambir}.

Our main result is that, if $(\I,\epsilon)$ is a limit of ``true'' linear 
systems $(\L_t,\epsilon_t)$ along a family of curves $C_t$ 
degenerating to $C$, then the Weierstrass 
divisors of the linear systems $(\L_t,\epsilon_t)$, parameterizing weighted 
Weierstrass points, converge to a subscheme of $C$ containing 
$Z(\I,\epsilon)$ and whose associated 0-cycle is $R(\I,\epsilon)$; see 
Theorem~\ref{thm}.

So, the 0-cycle of the limit subscheme is intrinsic to $(\I,\epsilon)$. But the 
limit subscheme itself may depend on the degeneration; see Example~\ref{example}. 
As a matter of fact, we observe that the limit depends only 
on the family of invertible sheaves $\L_t$ degenerating to $\I$; 
see Remark~\ref{rmk}. The information one needs to retain from the degeneration 
is the map $\I^{\ox r+1}\to\J$ to a torsion-free, rank-1 sheaf $\J$ obtained, in 
a sense, as the limit of the identity maps of the $\L^{\ox r+1}_t$. 

This indicates that, as far as Weierstrass points are concerned, 
instead of considering moduli spaces for torsion-free, 
rank-1 sheaves --- the compactified Jacobians of \cite{AK2}, \cite{AK}, \cite{DS}, 
\cite{E4}, \cite{OS} or \cite{Ses}, for instance --- 
it might be necessary to consider moduli spaces 
of sheaves $\I$ with additional structures, so that maps like 
$\I^{\ox r+1}\to\J$ are encoded. 
Pacini and the first author consider 
spaces of the type for nodal curves in \cite{EP}. 

The condition that $(\I,\epsilon)$ be strongly nondegenerate, instead of simply 
nondegenerate, is automatic if $C$ is irreducible, but a strong condition 
otherwise. Indeed, it is rare that linear systems of interest, as the 
canonical systems, degenerate to strongly (generalized) linear systems on 
reducible curves; see \cite{Cat} and \cite{EH2} for discussions on this. 
Different approaches are thus necessary, as those taken in \cite{EE} and 
\cite{EH2} or, more recently, in \cite{Oss}.

We do not know whether Theorem \ref{thm}, our main result, 
can be adapted to hold in positive 
characteristic. In fact, our current knowledge of how Weierstrass points vary in 
families, even of smooth curves, is very limited. Nevertheless, as usual in the 
theory, all of the results in the present paper hold if the characteristic of 
the base field is large enough, for instance, larger than the rank of the 
linear systems considered. 

Throughout the paper we adopt a local approach, defining the relevant sections 
of sheaves, the Wronskians, by a patching construction. A global approach is 
possible, in the spirit found in \cite{Lak} and \cite{LT0}, by using the 
substitutes for the sheaves of principals parts given in \cite{GG}, 
\cite{E1}, \cite{E2}, 
\cite{LT1} or \cite{LT2}. For this, we refer the reader to \cite{Pat}.

Here is an outline of the paper. In Section \ref{BNintro} 
we recall linear systems and 
Weierstrass points on smooth curves. In Section \ref{SingC} 
we recall fundamental classes 
and torsion-free, rank-1 sheaves on singular curves. In Section \ref{BOintro} 
we define the Weierstrass cycle of a generalized linear system. In 
Section \ref{compbir} we 
compare Weierstrass cycles using birational maps. In 
Section \ref{invramsch} we define 
the intrinsic Weierstrass scheme of a generalized linear system. In 
Section~\ref{degtorfree} we 
study families of torsion-free, rank-1 sheaves. Finally, in 
Section~\ref{deglinsys} we prove our 
main result, Theorem \ref{thm}, which gives information on limits of Weierstrass 
divisors.

The present paper is heavily based on the second author's doctor 
thesis \cite{Pat}. We thank Steven Kleiman for many comments and references.

\section{Linear systems on smooth curves}\label{BNintro}

\begin{subsct}\label{notation}\setcounter{equation}{0}
{\it Terminology.} A \emph{curve} is a projective, 
reduced scheme of pure dimension 1 over an algebraically closed field. The 
\emph{arithmetic genus} of a curve $C$ is $h^1(C,\O_C)$. 
A \emph{divisor} is a Cartier divisor. A 
\emph{cycle} is a 0-cycle. A \emph{point} is a closed point, unless specified 
otherwise. 

Given a Cartier divisor $D$ of a curve $C$ over an algebraically closed field $k$, 
and $P\in C$, let $\mult_P(D)$ denote the \emph{multiplicity} of $D$ at $P$; if 
$D$ is given at $P$ by $a/b$, for $a,b\in\O_{C,P}$, then
$$
\mult_P(D)=\dim_k\frac{\O_{C,P}}{(a)}-\dim_k\frac{\O_{C,P}}{(b)},
$$
where $k$ is the base field. Let $[D]$ denote the associated cycle, namely,
$$
[D]:=\sum_{P\in C}\mult_P(D)[P].
$$

Likewise, given a coherent sheaf $\F$ on $C$ with finite support, let 
$\mult_P(\F):=\dim_k\F_P$ for each $P\in C$ and set
$$
[\F]:=\sum_{P\in C}\mult_P(\F)[P].
$$
If $Y\subset C$ is a finite subscheme, set $\mult_P(Y):=\mult_P(\O_Y)$ for 
every $P\in C$ and $[Y]:=[\O_Y]$.

Finally, for a coherent sheaf on $C$, its \emph{torsion} subsheaf is the 
maximum coherent subsheaf with finite support. 
\end{subsct}

\begin{subsct}\label{ramcyc}\setcounter{equation}{0}
{\it Linear systems and Weierstrass points.} 
Let $C$ be a curve of arithmetic genus $g$ over an 
algebraically closed field $k$ of characteristic zero. 
Let $\L$ be an invertible sheaf on $C$ and 
$\epsilon\:V\to\Gamma(C,\L)$ a map of 
vector spaces over $k$. Set $d:=\deg\L$ and $r:=\dim_k V-1$. 

We say that 
$(\L,\epsilon)$ is a 
\emph{linear system} of \emph{degree} $d$ and \emph{rank} $r$. 
We say that $(\L,\epsilon)$ is \emph{nondegenerate} 
if $r\geq 0$ and $\epsilon$ is injective. 
If, moreover, for every irreducible component 
$Y\subseteq C$ the composition
$$
V\lra\Gamma(C,\L)\lra\Gamma(Y,\L|_Y)
$$
of $\epsilon$ with the restriction map is injective, 
then we say that $(\L,\epsilon)$ is \emph{strongly nondegenerate}.

Assume $(\L,\epsilon)$ is strongly nondegenerate. Let $P$ be a 
simple point of $C$, that is, a point 
on the nonsingular locus of $C$. We say that an integer $e$ is an 
\emph{order} of $(\L,\epsilon)$ at $P$ 
if there is a nonzero $v\in V$ such that $\epsilon(v)$ vanishes at $P$ with 
order $e$. If two sections of $\L$ have the same 
order at $P$, a certain $k$-linear combination of them will be zero 
or have higher order. Thus there are exactly $r+1$ 
orders of $(\L,\epsilon)$ at $P$. Putting them in increasing order we get a sequence
$$
e_0(P),e_1(P)\dots,e_r(P),
$$
called the {\it order sequence} of $(\L,\epsilon)$ at $P$. 

For each simple $P\in C$, put
$$
e(P):=\sum_{i=0}^r\big(e_i(P)-i\big).
$$
We call $P$ a \emph{Weierstrass point} of $(\L,\epsilon)$ if $e(P)>0$. 

If $C$ is nonsingular, we call the cycle
$$
R(\L,\epsilon):=\sum_{P\in C}e(P)[P]
$$
the \emph{Weierstrass cycle} of $(\L,\epsilon)$. 
That it is indeed a cycle, that is, 
that there are only finitely many Weierstrass points of $(\L,\epsilon)$, will be 
seen in Subsection \ref{cycdiv}.
\end{subsct}

\begin{subsct}\label{ramdiv}\setcounter{equation}{0}
{\it The Weierstrass divisor.} Keep the setup of 
Subsection \ref{ramcyc}. In particular, assume that $(\L,\epsilon)$ is a 
strongly nondegenerate linear system.

The sheaf of K\"ahler differentials $\Omega^1_C$ is invertible on the 
nonsingular locus of $C$. Thus the nonsingular locus can be covered by open 
subschemes $U$ for which 
$\Omega^1_U$ and $\L|_U$ are trivial. For such a $U$, 
let $\mu\in\Gamma(U,\Omega^1_C)$ and 
$\sigma\in\Gamma(U,\L)$ be sections generating 
$\Omega^1_U$ and $\L|_U$. 

Fix a basis $\beta=(v_0,\dots,v_r)$ of $V$. Then there are 
regular functions $f_0,\dots,f_r$ on $U$ such that 
$\epsilon(v_i)|_U=f_i\sigma$ for each $i=0,\dots,r$. Let $\partial$ be the 
$k$-linear 
derivation of $\Gamma(U,\O_C)$ such that 
$dh=\partial h\mu$ for each regular function $h$ on $U$. 
Form the Wronskian determinant:
$$
w(\beta,\sigma,\mu):=\begin{vmatrix}
f_0 & \dots & f_r\\
\partial f_0 & \dots & \partial f_r\\
\vdots & \ddots &\vdots\\
\partial^rf_0 & \dots & \partial^rf_r\\
\end{vmatrix}.
$$

If $\sigma'$ and $\mu'$ are other generators of $\L|_U$ and 
$\Omega^1_U$, respectively, then 
$\sigma'=a\sigma$ and $\mu'=b\mu$ for certain everywhere 
nonzero regular functions $a$ and $b$ on $U$. Also, if 
$\beta'=(v'_0,\dots,v'_r)$ is another basis of $V$, then $\beta'=\beta M$, 
where $M$ is an invertible matrix of size $r+1$ and entries in $k$. By 
the multilinearity of the determinant and the product rule of derivations,
$$
w(\beta',\sigma',\mu'):=\begin{vmatrix}
af'_0 & \dots & af'_r\\
b\partial(af'_0) & \dots & b\partial(af'_r)\\
\vdots & \ddots &\vdots\\
(b\partial)^r(af'_0) & \dots & (b\partial)^r(af'_r)\\
\end{vmatrix}=ca^{r+1}b^{\binom{r+1}{2}}w(\beta,\sigma,\mu),
$$
where $c:=\det M$.

Thus the $w(\beta,\sigma,\mu)$ patch up to a section $w$ of 
$$
\L^{\ox r+1}\ox(\Omega^1_C)^{\ox\binom{r+1}{2}}
$$
over the nonsingular locus of $C$, well-defined up to multiplication by an element 
of $k^*$.

Assume  $C$ is nonsingular. Then we call $w$ a {\it Wronskian} of $(\L,\epsilon)$. 
The zero scheme of $w$ is denoted by 
$W(\L,\epsilon)$ and called the 
{\it Weierstrass divisor} of $(\L,\epsilon)$. Though $w$ is only defined 
modulo $k^*$, the divisor $W(\L,\epsilon)$ is well-defined. 

That $W(\L,\epsilon)$ is indeed a divisor will be 
seen in Subsection \ref{cycdiv}. Since $\L$ has degree $d$ and 
$\Omega^1_C$ has degree $2g-2n$, 
where $n$ is the number of connected components of $C$, 
it follows that
\begin{equation}\label{pluck}
\deg[W(\L,\epsilon)]=\big(r+1\big)\big(d+r(g-n)\big),
\end{equation}
a formula known as the \emph{Pl\"ucker formula}.
\end{subsct}

\begin{subsct}\label{cycdiv}\setcounter{equation}{0}
{\it From divisor to cycle.} Keep the setup of 
Subsection \ref{ramcyc}. 

The relation between the Weierstrass cycle and divisor is simple: the 
cycle is that associated to the divisor. 
To prove this, let $P$ be a simple point of $C$. Let $t$ be a local parameter of 
$C$ at $P$. Then $t$ is a 
regular function on an open neighborhood 
$U\subseteq C$ of $P$. 
Shrinking $U$ around $P$ if necessary, we may assume that 
$dt$ generates $\Omega^1_U$. Also, we may assume there 
is $\sigma\in\Gamma(U,\L)$ generating $\L|_U$.

There are $v_0,\dots,v_r\in V$ such that $\epsilon(v_i)$ vanishes at $P$ with 
order $e_i(P)$ for $i=0,1,\dots,r$. Shrinking 
$U$ around $P$ if necessary, we may assume that there are 
everywhere nonzero regular functions $u_0,\dots,u_r$ on 
$U$ such that 
$$
\epsilon(v_i)|_U=u_it^{e_i(P)}\sigma
$$
for each $i=0,\dots,r$. Since the orders $e_i(P)$ are distinct, it 
follows that $\beta:=(v_0,\dots,v_r)$ is a basis 
of $V$.

The Wronskian determinant 
$w(\beta,\sigma,dt)$ has the form:
$$
w(\beta,\sigma,dt)=\begin{vmatrix}
u_0t^{e_0(P)} & \dots & u_rt^{e_r(P)}\\
\frac{d}{dt}(u_0t^{e_0(P)}) & \dots & 
\frac{d}{dt}(u_rt^{e_r(P)})\\
\vdots & \ddots &\vdots\\
\frac{d^r}{dt^r}(u_0t^{e_0(P)}) & \dots & 
\frac{d^r}{dt^r}(u_rt^{e_r(P)})\\
\end{vmatrix}.
$$
Using the multilinearity of the determinant, the 
product rule of derivations, and the fact that 
$\frac{d}{dt}(t^j)=jt^{j-1}$ 
for each integer $j\geq 1$, we 
get 
$$
w(\beta,\sigma,dt)=t^{e(P)}v,
$$
where $e(P)=\sum_i(e_i(P)-i)$, and where $v$ is a regular function on $U$ 
such that 
$$
v(P)=\begin{vmatrix}
1 & 1 & \dots & 1\\
e_0(P) & e_1(P) & \dots & e_r(P)\\
\vdots & \vdots & \ddots & \vdots\\
e_0(P)^r & e_1(P)^r & \dots & e_r(P)^r
\end{vmatrix}
\prod_{i=0}^ru_i(P).
$$
Since the $e_i(P)$ are 
distinct, the Van der Monde determinant is nonzero. So $v(P)\neq 0$, 
and hence $w(\beta,\sigma,dt)$ vanishes at $P$ with order $e(P)$. 

Since the above reasoning is valid for every simple point $P$ of $C$, it 
follows that the section $w$ of Subsection \ref{ramdiv} has finitely many zeros. 
In particular, if $C$ is nonsingular, then $W(\L,\epsilon)$ is indeed a divisor. 
Furthermore, since for each $P\in C$ the multiplicity of $W(\L,\epsilon)$ at $P$ is 
$e(P)$, the cycle associated to $W(\L,\epsilon)$ is $R(\L,\epsilon)$.
\end{subsct} 

\section{Singular curves}\label{SingC}

\begin{subsct}\label{canmap}\setcounter{equation}{0}
{\it The fundamental class.} Let $C$ be a curve over an algebraically closed 
field $k$. Let $\M$ be the sheaf of meromorphic differentials of $C$. 
Given a point $P\in C$, we have
$$
\M_P=\prod_{i=1}^m\Omega^1_{K_i/k},
$$
where $K_1,\dots,K_m$ are the fields of functions of the irreducible components 
of $C$ containing $P$. 

Let $\w_C$ denote Rosenlicht's sheaf of regular differentials of $C$; see 
\cite{Conrad}, Section 5.2, p.~226. It is the subsheaf 
of $\M$ satisfying the following 
property: A meromorphic differential $\tau$ is in 
$\w_{C,P}$ for $P\in C$ if 
$$
\sum_{i=1}^m\text{Res}_{Q_i}(f\tau)=0
$$
for every $f\in\O_{C,P}$, where $Q_1,\dots,Q_m$ are the points on the normalization 
of $C$ mapping to $P$. From its defining property, $\w_C$ contains 
those meromorphic differentials arising from K\"ahler differentials of $C$. Then 
there is a natural map $\gamma\:\Omega^1_C\to\w_C$, 
called the \emph{fundamental class}. (See 
\cite{Li}, p.~39, for a justification of the name.) From 
the defining property of $\w_C$, this map is an isomorphism on the 
nonsingular locus of $C$. 

It follows from \cite{Conrad}, Thm.~5.2.3, p.~230, that $\w_C$ is a dualizing 
sheaf for $C$. Thus, if $C$ is Gorenstein, that is, if the local rings 
$\O_{C,P}$ are Gorenstein for all $P\in C$, then it follows from 
\cite{Hart}, Prop.~9.3, p.~296, that $\w_C$ is invertible. (Rosenlicht's own 
proofs of 
these two results are \cite{Rosen}, Thm.~8, p.~177 and Thm.~10, p.~179, 
respectively, for irreducible curves; see comments on Section~4 of loc.~cit., 
especially on p.~185, for the case of reducible curves.)
\end{subsct}

\begin{subsct}\label{tfrk1sh}\setcounter{equation}{0}
{\it Torsion-free, rank-1 sheaves.} Let $C$ be a curve of arithmetic genus 
$g$. Let $\I$ be a coherent sheaf on $C$. We say that $\I$ is 
\emph{torsion-free} if the 
generic points of $C$ are its only associated points. Also, $\I$ is said to be 
\emph{rank-1} if $\I$ is invertible on a dense open subset of $C$. Putting it in 
different words, $\I$ is torsion-free, rank-1 if it is isomorphic 
to a sheaf of fractional ideals, that is, a coherent subsheaf of the 
sheaf of rational functions which is everywhere nonzero. 

Invertible sheaves are torsion-free, rank-1. Conversely, torsion-free, 
rank-1 sheaves are invertible on the nonsingular locus of $C$.

Assume $\I$ is torsion-free, rank-1. 
The \emph{degree} of $\I$ is denoted by $\deg\I$ and defined by 
$$
\deg\I:=\chi(\I)-\chi(\O_C)=h^0(C,\I)-h^1(C,\I)+g-n,
$$
where $n$ is the number of connected components of $C$. 
If $\I$ is invertible, then $\deg\I$ is the usual degree, by Riemann--Roch. 

If $\L$ is an invertible sheaf, then
\begin{equation}\label{degdeg}
\deg\I\ox\L=\deg\I+\deg\L.
\end{equation}
Indeed, $\I\ox\L$ is torsion-free, rank-1. Also, if $P$ is a simple point 
of $C$, 
$$
\deg\I\ox\O_C(P)=\chi(\I\ox\O_C(P))-\chi(\O_C)=\chi(\I)+1-\chi(\O_C)=\deg\I+1.
$$
Since $\L\cong\O_C(\sum_i\pm P_i)$, where the $P_i$ are simple points of $C$, 
Equation~\eqref{degdeg} follows from applying the last equation repeatedly.

From its definition, $\w_C$ is torsion-free, rank-1. 
Furthermore, since $\I$ has depth 1 at every point of $C$, it follows 
from \cite{AK}, (6.5.3), p.~96, that
\begin{equation}\label{Ext10}
Ext^1(\I,\w_C)=0.
\end{equation}
Thus the spectral sequence 
associated to the composition of 
functors $Hom(-,\w_C)$ and $\Gamma(C,-)$ degenerates to yield 
$$
\text{Ext}^1(\I,\w_C)=H^1(C,Hom(\I,\w_C)).
$$
It follows that $Hom(\I,\w_C)$ satisfies ``duality properties'' 
with respect to $\I$. More 
precisely,
$$
h^0(C,Hom(\I,\w_C))=\dim_k\text{Hom}(\I,\w_C)=h^1(C,\I)
$$
and
$$
h^1(C,Hom(\I,\w_C))=\dim_k\text{Ext}^1(\I,\w_C)=h^0(C,\I).
$$
In particular, 
\begin{equation}\label{chiI}
\chi(Hom(\I,\w_C))=-\chi(\I).
\end{equation}

Equation \eqref{chiI} implies that the natural map
$$
\I\lra Hom(Hom(\I,\w_C),\w_C)
$$
is an isomorphism. Indeed, it is an isomorphism on 
the nonsingular locus of $C$, where $\I$ is invertible, whence injective with 
cokernel supported on a finite set. And the cokernel is zero 
because its Euler characteristic is zero, since
$$
\chi(Hom(Hom(\I,\w_C),\w_C))=-\chi(Hom(\I,\w_C))=\chi(\I).
$$

It follows from \eqref{chiI} as well that
$$
\deg\w_C=\chi(\w_C)-\chi(\O_C)=-2\chi(\O_C)=2g-2n.
$$
Also, if $C$ is Gorenstein, then $\w_C$ is invertible, and thus
\begin{equation}\label{deg-deg}
\begin{aligned}
\deg Hom(\I,\O_C)=&\deg Hom(\I,\w_C)-\deg\w_C\\
=&\chi(Hom(\I,\w_C))-\chi(\w_C)\\
=&-\chi(\I)+\chi(\O_C)\\
=&-\deg\I,
\end{aligned}
\end{equation}
where the first equality follows from \eqref{degdeg} and the natural isomorphism
$Hom(\I,\w_C)\cong Hom(\I,\O_C)\ox\w_C$.
\end{subsct}

\section{Linear systems on singular curves}\label{BOintro}

\begin{subsct}\label{ramifsing}\setcounter{equation}{0}
{\it Weierstrass divisors.} Keep the setup of 
Subsection \ref{ramcyc}. In particular, assume that $(\L,\epsilon)$ is a 
strongly nondegenerate linear system. 

If $C$ is singular then $\Omega^1_C$ is not invertible, and hence the 
reasoning in Subsections \ref{ramdiv} and \ref{cycdiv} 
cannot be directly applied everywhere.

However, assume $C$ is Gorenstein, and let $\gamma\:\Omega^1_C\to\w_C$ be the 
fundamental class. The curve $C$ can be covered by open subschemes $U$ for which 
$\w_C|_U$ is trivial. For such a $U$, let $\mu\in\Gamma(U,\w_C)$ be a 
section generating $\w_C|_U$. Then there is a $k$-linear derivation $\partial$ 
of $\Gamma(U,\O_C)$ such that 
$\gamma df=\partial f\mu$ for each regular funtion $f$ on $U$.

We can now reason exactly as in Subsection \ref{ramdiv}, to obtain the 
zero scheme of a global section $w$ of 
$$ 
\L^{\ox r+1}\ox\w_C^{\ox\binom{r+1}{2}}.
$$
Denote this zero scheme by $W(\L,\epsilon)$. 
Since $\gamma$ is an isomorphism on the nonsingular locus of $C$, the 
reasoning in Subsection \ref{cycdiv} can be applied. In 
particular, it follows that $W(\L,\epsilon)$ is a divisor. And the multiplicity of 
$W(\L,\epsilon)$ at a simple $P\in C$ is $e(P)$, as defined 
in Subsection \ref{ramcyc}.

We call $w$ a \emph{Wronskian} and 
$W(\L,\epsilon)$ the \emph{Weierstrass divisor} of $(\L,\epsilon)$. 
Since $\w_C$ has degree $2g-2n$, where $n$ is the number of connected components 
of $C$, Pl\"ucker formula \eqref{pluck} holds. 
\end{subsct}

\begin{subsct}\label{tfrk1}\setcounter{equation}{0}
{\it Generalized linear systems.} Let $C$ be a curve and $\I$ a torsion-free, 
rank-1 sheaf on $C$. 

There is an injection $\I\hookrightarrow\L$ into an invertible sheaf $\L$. 
Indeed, let $\O_C(1)$ be an ample invertible sheaf on $C$. Then, for 
$m$ sufficiently large, 
$Hom(\I,\O_C)(m)$ is generated by global sections. In particular, it has a 
global section which is nonzero at the (finitely many) generic points of $C$. This 
section corresponds to an injection $\I\hookrightarrow\O_C(m)$. (Notice that, since 
$C$ is reduced and $\I$ and $\L$ are rank-1, any injection 
$\I\hookrightarrow\L$ is generically an isomorphism.)

Let $\epsilon\:V\to\Gamma(C,\I)$ be a map of vector spaces. Set $d\:=\deg\I$ and 
$r:=\dim V-1$. We say that $(\I,\epsilon)$ is a (\emph{generalized}) 
\emph{linear system} of 
\emph{degree} $d$ and \emph{rank} $r$. As in Subsection \ref{ramcyc}, 
we say that $(\I,\epsilon)$ is \emph{nondegenerate} if $r\geq 0$ and $\epsilon$ is 
injective. 

For any subcurve $Y\subseteq C$, that is, any reduced union of irreducible 
components of $C$, let $\I^Y$ denote the restriction $\I|_Y$ modulo torsion. 
In other words, let $\I^Y$ be the image of the natural map
$$
\I\lra\prod_{i=1}^m\I_{\xi_i},
$$
where $\xi_1,\dots,\xi_m$ are the generic points of $Y$. We say that $(\I,\epsilon)$ 
is \emph{strongly nondegenerate} if for each irreducible component $Y\subseteq C$ 
the composition
$$
V\lra\Gamma(C,\I)\to\Gamma(Y,\I^Y)
$$
of $\epsilon$ with the map induced by the quotient map $\I\to\I^Y$ is injective.

Let $\varphi\:\I\hookrightarrow\L$ be an injection into an invertible 
sheaf. Let $\epsilon'$ be the composition
$$
\begin{CD}
V @>\epsilon >> \Gamma(C,\I) @>\Gamma(\varphi)>> \Gamma(C,\L).
\end{CD}
$$
Since $\varphi$ is generically an isomorphism, the induced linear system 
$(\L,\epsilon')$ is (strongly) nondegenerate if and only if $(\I,\epsilon)$ is. 
\end{subsct}

\begin{subsct}\label{singramcyc}\setcounter{equation}{0}
{\it Weierstrass cycles.} Let $C$ be a Gorenstein curve of arithmetic genus $g$ 
over an algebraically closed field of characteristic zero. 
Let $\I$ be a torsion-free, rank-1 sheaf on 
$C$ and $\epsilon\:V\to\Gamma(C,\I)$ a map of vector spaces. 
Assume $(\I,\epsilon)$ is 
a strongly nondegenerate linear system. Let $r$ denote its rank and $d$ its degree.

Let $\varphi\:\I\hookrightarrow\L$ be an injection into an invertible 
sheaf. Let $\epsilon'$ be the composition
$$
\begin{CD}
V @>\epsilon >> \Gamma(C,\I) @>\Gamma(\varphi)>> \Gamma(C,\L).
\end{CD}
$$
Also, let $Y\subseteq C$ be the closed subscheme such that 
$\text{Im}(\varphi)=\I_{Y/C}\L$. Since $\varphi$ is generically an 
isomorphism, $Y$ is finite. 

Define the 
\emph{Weierstrass cycle} of $(\I,\epsilon)$ by
$$
R(\I,\epsilon):=[W(\L,\epsilon')]-(r+1)[Y].
$$
By Proposition \ref{indepI} below, the cycle $R(\I,\epsilon)$ does not 
depend on the choice of injection $\varphi$.

The Pl\"ucker formula holds for $R(\I,\epsilon)$, that is, 
\begin{equation}\label{pluckI}
\deg R(\I,\epsilon)=\big(r+1\big)\big(d+r(g-n)\big),
\end{equation}
where $n$ is the number of connected components of $C$. 
Indeed, from the usual Pl\"ucker formula \eqref{pluck}, we get
\begin{equation}\label{pluckprior}
\deg R(\I,\epsilon)=(r+1\big)\big(d'+r(g-n)\big)-(r+1)\deg[Y],
\end{equation}
where $d':=\deg\L$. However, since $\L$ is invertible, the additiveness of the 
Euler characteristic yields $\chi(\L)=\chi(\I)+\deg[Y]$. Using this equation in 
\eqref{pluckprior} we get \eqref{pluckI}.
\end{subsct}

\begin{lemma}\label{inj2} 
Let $C$ be a curve, $\I$ a torsion-free, rank-1 sheaf on $C$ and 
$\varphi\:\I\hookrightarrow\L$ and $\psi\:\I\hookrightarrow\M$ injections into 
invertible sheaves. Then there are an invertible sheaf $\N$ and injections 
$\lambda\:\L\hookrightarrow\N$ and $\mu\:\M\hookrightarrow\N$ such that 
$\lambda\varphi=\mu\psi$.
\end{lemma}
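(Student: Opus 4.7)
The plan is to realize $\L$ and $\M$ simultaneously as fractional ideal sheaves extending a fixed embedding of $\I$ into the sheaf of rational functions, and then enlarge their sum to an invertible sheaf using the embedding construction recalled in Subsection~\ref{tfrk1sh}.

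Concretely, let $\mathcal{K}$ denote the sheaf of rational functions on $C$. Since $\I$ is torsion-free rank-1, fixing an isomorphism $\I\ox_{\O_C}\mathcal{K}\risom\mathcal{K}$ produces an injection $\iota\colon\I\hookrightarrow\mathcal{K}$ realizing $\I$ as a fractional ideal sheaf. The injections $\varphi$ and $\psi$ are generic isomorphisms (as noted in Subsection~\ref{tfrk1}, a general fact about injections of rank-1 sheaves on a reduced curve), so tensoring them with $\mathcal{K}$ gives isomorphisms; composing their inverses with $\iota$ yields canonical injections $\iota_\L\colon\L\hookrightarrow\mathcal{K}$ and $\iota_\M\colon\M\hookrightarrow\mathcal{K}$ satisfying $\iota_\L\varphi=\iota=\iota_\M\psi$. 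Now set $\N_0:=\iota_\L(\L)+\iota_\M(\M)\subseteq\mathcal{K}$, the $\O_C$-submodule of $\mathcal{K}$ generated by the two images; this is a coherent subsheaf of $\mathcal{K}$, and hence torsion-free rank-1. Applying the construction of Subsection~\ref{tfrk1sh} to $\N_0$ --- for $m$ sufficiently large, $Hom(\N_0,\O_C)(m)$ is generated by global sections, and a section nonzero at every generic point of $C$ gives an injection $j\colon\N_0\hookrightarrow\O_C(m)$ --- produces an invertible sheaf $\N:=\O_C(m)$ containing $\N_0$. Put $\lambda:=j\iota_\L$ and $\mu:=j\iota_\M$; both are injective and $\lambda\varphi=j\iota=\mu\psi$, as required.

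The step that warrants care is precisely this final enlargement: $\N_0$ itself is generally not invertible at singular points of $C$. At a node, for example, the sum of the invertible fractional ideals $(x)$ and $(y)$ equals the maximal ideal $(x,y)$, which is not principal, so the passage $\N_0\hookrightarrow\N$ cannot be skipped. Once fractional ideals are adopted as the ambient setting, however, the compatibility $\lambda\varphi=\mu\psi$ becomes automatic, both sides being the common composite $j\iota$.
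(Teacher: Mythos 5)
Your argument is correct, and it takes a genuinely different route from the paper's. The paper's proof trivializes $\I$, $\L$, $\M$ on a common dense open $U$, writes $\varphi(\sigma)=a\tau$ and $\psi(\sigma)=b\upsilon$, and defines $\lambda'(\tau)=b$, $\mu'(\upsilon)=a$ so that both composites send $\sigma$ to $ab$; it then extends $\lambda'$ and $\mu'$ from a principal open $C_f\subseteq U$ to global injections into $\O_C(m)$ by twisting with powers of $f$, and deduces $\lambda\varphi=\mu\psi$ from agreement on the dense $U$. You instead realize everything inside the sheaf of rational functions, form the sum $\N_0=\iota_\L(\L)+\iota_\M(\M)$ (in effect the pushout of $\L\hookleftarrow\I\hookrightarrow\M$ modulo torsion), and apply, once, the already-established fact that a torsion-free rank-1 sheaf embeds in an invertible one. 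This makes the commutativity $\lambda\varphi=\mu\psi$ tautological rather than something checked generically and propagated by density, and it concentrates all the work in a single quotable step; the paper's version avoids introducing the auxiliary sheaf $\N_0$ and is more self-contained about the extension-by-twisting mechanism, but the two ultimately rest on the same ample-twist device. Two small corrections: the embedding construction you invoke is recalled in Subsection \ref{tfrk1}, not \ref{tfrk1sh}; and in your illustrative aside at a node, the ideals $(x)$ and $(y)$ are not invertible fractional ideals in the paper's sense (each vanishes identically on one branch, hence is not everywhere nonzero) --- take instead $(x+y)$ and $(x-y)$, whose sum is again the non-principal maximal ideal, to make the same point.
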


\begin{proof} Let $U\subseteq C$ be an open dense subscheme such that 
$\I|_U$, $\L|_U$ and $\M|_U$ are trivial, and let $\sigma$, $\tau$ and $\upsilon$, 
respectively, be sections over $U$ 
generating these restrictions. Then there are regular 
functions $a$ and $b$ on $U$ such that $\varphi(\sigma)=a\tau$ and 
$\psi(\sigma)=b\upsilon$. Since $\varphi$ and $\psi$ are injections, $a$ and $b$ 
have finitely many zeros on $U$. 

Define maps $\lambda'\:\L|_U\to\O_U$ and $\mu'\:\M|_U\to\O_U$ by setting 
$\lambda'(\tau)=b$ and $\mu'(\upsilon)=a$. Since $a$ and $b$ have finitely many 
zeros on $U$, the maps $\lambda'$ and $\mu'$ are injective. And clearly 
$\lambda'\varphi|_U=\mu'\psi|_U$. 

Let $\O_C(1)$ be an ample sheaf on $C$. Since $C-U$ is finite, there are an integer 
$\ell$ and a global section $f$ of $\O_C(\ell)$, nonzero on every irreducible 
component 
of $C$, such that the open subscheme 
$$
C_f:=\{P\in C\,|\,f(P)\neq 0\}
$$
is contained in $U$. Since $C_f$ is also dense in $C$, we may assume that 
$U=C_f$; and that $\ell=1$.

View $\lambda'$ as a section of $Hom(\L,\O_C)$ over $U$. There are 
an integer $m$ and a global section $\lambda$ of $Hom(\L,\O_C)(m)$ such that 
$\lambda|_U=\lambda'\ox f^{\ox m}$. We may view $\lambda$ as a map 
$\L\hookrightarrow\O_C(m)$, which is on $U$ the composition of 
$\lambda'$ with the multiplication by $f^{\ox m}$. Then $\lambda$ is an injection 
because $\lambda'$ is and $U$ is dense in $C$. 

Likewise, there are an integer $n$ and an injection $\mu\:\M\hookrightarrow\O_C(n)$ 
whose restriction to $U$ is the composition of $\mu'$ with the multiplication by 
$f^{\ox n}$. Up to replacing $m$ and $n$ by $\max(m,n)$, we may assume that $m=n$. 

Set $\N:=\O_C(m)$. Since $\lambda'\varphi|_U=\mu'\psi|_U$, the compositions 
$\lambda\varphi$ and $\mu\psi$ agree on $U$. Since $U$ is dense in $C$, they agree 
everywhere.
\end{proof}

\begin{proposition}\label{indepI} 
Let $C$ be a Gorenstein curve over an algebraically closed field 
$k$ of characteristic zero. Let $(\I,\epsilon)$ be a 
strongly nondegenerate (generalized) linear system of $C$. 
Then the cycle $R(\I,\epsilon)$ does not 
depend on the choice of injection of $\I$ into an invertible sheaf.
\end{proposition}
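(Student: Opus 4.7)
The plan is to reduce to a simple case via Lemma~\ref{inj2}. Given two injections $\varphi\:\I\hookrightarrow\L$ and $\psi\:\I\hookrightarrow\L'$ into invertible sheaves, the lemma produces an invertible sheaf $\N$ together with injections $\lambda\:\L\hookrightarrow\N$ and $\mu\:\L'\hookrightarrow\N$ satisfying $\lambda\varphi=\mu\psi$. It therefore suffices to prove that the cycle computed from $\varphi$ agrees with the one computed from $\lambda\varphi$ (and symmetrically for $\psi$ and $\mu\psi$); applying this twice yields the proposition. So I reduce to comparing $\varphi$ with $\lambda\varphi$ when $\lambda$ is an injection between invertible sheaves.

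In this setup, let $Z\subseteq C$ be the effective Cartier divisor with $\text{Im}(\lambda)=\I_{Z/C}\N$, and let $Y'$ denote the finite subscheme attached to $\lambda\varphi$, so that $\I_{Y'/C}=\I_{Y/C}\I_{Z/C}$. Locally at a point $P$, the ideal $\I_{Z/C,P}$ is generated by a non-zero-divisor $g$, and the short exact sequence
$$
0\lra\I_{Z/C,P}/\I_{Y/C,P}\I_{Z/C,P}\lra\O_{C,P}/\I_{Y/C,P}\I_{Z/C,P}\lra\O_{C,P}/\I_{Z/C,P}\lra0,
$$
together with the isomorphism $\O_{C,P}/\I_{Y/C,P}\stackrel{\sim}{\to}\I_{Z/C,P}/\I_{Y/C,P}\I_{Z/C,P}$ given by multiplication by $g$, yields the additivity of multiplicities and hence $[Y']=[Y]+[Z]$ as 0-cycles.

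The main step is to check that $[W(\N,\epsilon'')]=[W(\L,\epsilon')]+(r+1)[Z]$, where $\epsilon'':=\Gamma(\lambda)\circ\epsilon'$. Pick local generators $\sigma$ of $\L$ and $\tau$ of $\N$, a generator of $\w_C$ with associated derivation $\partial$, and write $\lambda(\sigma)=g\tau$, so $\epsilon''(v_i)=gf_i\tau$ whenever $\epsilon'(v_i)=f_i\sigma$. The Leibniz rule gives
$$
\partial^j(gf_i)=\sum_{k=0}^j\binom{j}{k}(\partial^{j-k}g)(\partial^kf_i),
$$
which factors the matrix $\bigl(\partial^j(gf_i)\bigr)_{0\le i,j\le r}$ as a product of the lower triangular matrix $\bigl(\binom{j}{k}\partial^{j-k}g\bigr)_{j,k}$ (with $g$ on the diagonal) and the matrix $(\partial^kf_i)_{k,i}$. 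Taking determinants yields $w(\beta,\tau,\mu)=g^{r+1}w(\beta,\sigma,\mu)$, so $W(\N,\epsilon'')=W(\L,\epsilon')+(r+1)Z$ locally, and hence globally as divisors. Combining the two displayed equalities,
$$
[W(\N,\epsilon'')]-(r+1)[Y']=[W(\L,\epsilon')]+(r+1)[Z]-(r+1)\bigl([Y]+[Z]\bigr)=[W(\L,\epsilon')]-(r+1)[Y],
$$
which completes the reduction and the proof. The only nontrivial ingredient is the Leibniz-rule determinant identity above; everything else is bookkeeping with Lemma~\ref{inj2}.
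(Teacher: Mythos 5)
Your proof is correct and follows essentially the same route as the paper's: reduce via Lemma~\ref{inj2} to a common invertible overmodule, use additivity of the associated cycles under multiplication by the invertible ideal $\I_{Z/C}$, and apply the Wronskian identity $w(gf_0,\dots,gf_r)=g^{r+1}w(f_0,\dots,f_r)$ (your triangular matrix factorization is just a tidier way of phrasing the paper's appeal to multilinearity and the Leibniz rule). The only cosmetic difference is that you compare $\varphi$ and $\psi$ to $\lambda\varphi=\mu\psi$ one at a time rather than in a single chain of equalities.
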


\begin{proof} Let $\varphi\:\I\hookrightarrow\L$ and $\psi\:\I\hookrightarrow\M$ 
be injections into invertible sheaves. By Lemma \ref{inj2}, there are an 
invertible sheaf $\N$ and injections 
$\lambda\:\L\hookrightarrow\N$ and $\mu\:\M\hookrightarrow\N$ such that 
$\lambda\varphi=\mu\psi$. Let $(\L,\epsilon')$, $(\M,\epsilon'')$ and 
$(\N,\epsilon''')$ be the induced linear systems.

Let $r$ be the rank of $(\I,\epsilon)$. 
Let $T$, $X$, $Y$ and $Z$ be the closed subschemes of 
$C$ such that
$$
\varphi(\I)=\I_{T/C}\L,\  
\psi(\I)=\I_{X/C}\M,\  
\lambda(\L)=\I_{Y/C}\N\  \text{and}\  
\mu(\M)=\I_{Z/C}\N.
$$
Since $\lambda\varphi=\mu\psi$, we have $\I_{T/C}\I_{Y/C}=\I_{X/C}\I_{Z/C}$. Thus, 
since $Y$ and $Z$ are divisors,
\begin{equation}\label{XYZ}
[T]+[Y]=[X]+[Z].
\end{equation}

Furthermore, given regular functions $f,f_0,\dots,f_r$ on an open subscheme 
$U$ of $C$, and a $k$-linear derivation $\partial$ of $\Gamma(U,\O_C)$, the 
multilinearity of the determinant and the product rule of derivations yield
$$
\begin{vmatrix}
ff_0 & \dots & ff_r\\
\partial(ff_0) & \dots & \partial(ff_r)\\
\vdots & \ddots &\vdots\\
\partial^r(ff_0) & \dots & \partial^r(ff_r)\\
\end{vmatrix}=
f^{r+1}\begin{vmatrix}
f_0 & \dots & f_r\\
\partial(f_0) & \dots & \partial(f_r)\\
\vdots & \ddots &\vdots\\
\partial^r(f_0) & \dots & \partial^r(f_r)\\
\end{vmatrix}.
$$
Thus
\begin{equation}\label{VLMN}
W(\N,\epsilon''')=W(\L,\epsilon')+(r+1)Y=W(\M,\epsilon'')+(r+1)Z.
\end{equation}
Finally, combining \eqref{XYZ} and \eqref{VLMN},
\begin{align*}
[W(\L,\epsilon')]-(r+1)[T]=&[W(\N,\epsilon''')]-(r+1)([Y]+[T])\\
=&[W(\N,\epsilon''')]-(r+1)([X]+[Z])\\
=&[W(\M,\epsilon'')]-(r+1)[X].
\end{align*}
\end{proof}

\section{Comparisons under birational maps}\label{compbir}

\begin{subsct}\label{bir}\setcounter{equation}{0}
{\it Birational maps.} Let $b\:C^\dagger\to C$ be a birational map between 
curves. Let $b^\#\:\O_C\to b_*\O_{C^\dagger}$ denote the comorphism.

Since $b$ is birational, $b^\#$ is injective. So we may view 
$\O_C$ inside $b_*\O_{C^\dagger}$ under $b^\#$. Let 
$$
R_b:=\Big[\frac{b_*\O_{C^\dagger}}{\O_C}\Big].
$$
Also, let $\F$ be the \emph{conductor ideal} of $b$, that is, the annihilator of 
$\text{Coker}(b^\#)$. Since $b$ is birational, $\F$ is 
torsion-free, rank-1. Also, $\F$ is a sheaf of ideals of $\O_C$ as well 
as of $b_*\O_{C^\dagger}$. In other words, $\F=b_*(\F\O_{C^\dagger})$ 
as subsheaves of 
$b_*\O_{C^\dagger}$. Let $Z\subset C$ be the subscheme defined 
by $\F$ and $Z^\dagger:=b^{-1}(Z)$. We call $Z$ the \emph{conductor scheme}. 
Then, since $b$ is finite,
\begin{equation}\label{piZR}
\begin{aligned}
b_*[Z^\dagger]=&b_*\Big[\frac{\O_{C^\dagger}}{\F\O_{C^\dagger}}\Big]=
\Big[\frac{b_*\O_{C^\dagger}}{\F}\Big]\\
=&\Big[\frac{b_*\O_{C^\dagger}}{\O_C}\Big]+
\Big[\frac{\O_C}{\F}\Big]=R_b+[Z].
\end{aligned}
\end{equation}

Let $\M$ and $\M^\dagger$ be the sheaves of meromorphic differentials of 
$C$ and $C^\dagger$, respectively. Since $b$ is birational, 
there is a natural isomorphism 
$b_*\M^\dagger\risom\M$. From the definition of regular 
differentials in Subsection \ref{ramifsing}, 
it follows that this isomorphism carries $b_*\w_{C^\dagger}$ into 
$\w_C$. Let $\iota\: b_*\w_{C^\dagger}\to\w_C$ denote the induced inclusion. 
Then we get the following natural commutative diagram:
\begin{equation}\label{WCC'}
\begin{CD}
\O_C @>d>> \Omega^1_C @>\gamma >> \w_C\\
@Vb^\#VV @V\alpha VV @A\iota AA\\
b_*\O_{C^\dagger} @>b_*d^\dagger >> b_*\Omega^1_{C^\dagger} 
@>b_*\gamma^\dagger >> b_*\w_{C^\dagger}
\end{CD}
\end{equation}
where $d$ and $d^\dagger$ are the universal derivations, 
$\gamma$ and $\gamma^\dagger$ 
are the fundamental classes, and $\alpha$ is the adjoint 
to the natural pullback map of 
differentials. 

Assume now that $C$ is Gorenstein in a neighborhood of $Z$. 
We claim that $R_b=[Z]$, and thus, by \eqref{piZR},
\begin{equation}\label{FZZ}
b_*[Z^\dagger]=2R_b.
\end{equation}
The claim follows from 
local duality. It follows as well from global duality, as we explain now. 
First of all, 
$b=b_1b_2\cdots b_m$, where each $b_i$ is birational, with conductor 
scheme $Z_i$ supported at a single point $P_i$, and such that 
$(b_i\dots b_{j-1})(P_j)\neq P_i$ for each $i$ and $j$ with $i<j$. Since 
it is enough to show that $R_{b_i}=[Z_i]$ for each $i$, we may assume, to prove 
the claim, that $Z$ is supported at a single point. Since $Z$ and $R_b$ have the 
same support, we need only show that $\deg R_b=\deg[Z]$. But
\begin{equation}\label{RpiZZ}
\begin{aligned}
\deg[Z]=&\chi(\O_C)-\chi(\F)\\
=&\chi(\w_C)-\chi(\F\w_C)\\
=&\chi(\w_C)-\chi(Hom(b_*\O_{C^\dagger},\w_C))\\
=&-\chi(\O_C)+\chi(b_*\O_{C^\dagger})\\
=&\deg R_b,
\end{aligned}
\end{equation}
where the second equality holds because $\w_C$ is invertible in a neighborhood of 
$Z$, the third because
$\F=Hom(b_*\O_{C^\dagger},\O_C)$, and the fourth by (global) duality.

Furthermore, we claim that 
\begin{equation}\label{Fw}
\iota(b_*\w_{C^\dagger})=\F\w_C.
\end{equation}
Indeed, $\iota(b_*\w_{C^\dagger})\subseteq\F\w_C$ because $\w_C$ is invertible in a 
neighborhood of $Z$, and hence $\F\w_C$ is the maximum subsheaf of $\w_C$ which is 
a sheaf of $b_*\O_{C^\dagger}$-modules. Furthermore,
$$
\chi(\w_C)-\chi(\F\w_C)=\chi(\O_{C^\dagger})-\chi(\O_C)=
\chi(\w_C)-\chi(b_*\w_{C^\dagger}),
$$
where the first equality follows from \eqref{RpiZZ} and the second by duality and 
the finiteness of $b$. Thus, since the source and target of the inclusion 
$b_*\w_{C^\dagger}\hookrightarrow\F\w_C$ have the same Euler characteristic, and 
the quotient has finite support, \eqref{Fw} follows.
\end{subsct}

\begin{subsct}\label{birI}\setcounter{equation}{0}
{\it Generalized linear systems and birational maps.} Keep the setup of 
Subsection \ref{bir}. 

For each torsion-free, rank-1 sheaf $\I$ on $C$, let $\I^b$ denote 
$b^*\I$ modulo torsion. Then there is a natural injection 
$h^b_\I\:\I\to b_*\I^b$. We let $R_b(\I)$ denote the 
cycle associated to $\text{Coker}(h^b_\I)$.

Notice that $h_{\O_C}^b$ is simply the comorphism of $b$. So 
$R_b=R_b(\O_C)$. Also, if $\I$ is 
invertible then so is $b^*\I$, and hence $\I^b=b^*\I$. Moreover, 
$h^b_\I=h^b_{\O_C}\ox\I$, and thus $R_b(\I)=R_b$. 

Let $\epsilon\:V\to\Gamma(C,\I)$ be a map of vector spaces. Then 
$(\I^b,\epsilon^\dagger)$ is a (generalized) linear system, where 
$\epsilon^\dagger$ is the composition
$$
\begin{CD}
V @>\epsilon >> \Gamma(C,\I) @>\Gamma(h^b_\I)>> \Gamma(C,b_*\I^b) @>=>> 
\Gamma(C^\dagger,\I^b).
\end{CD}
$$
We say that $(\I^b,\epsilon^\dagger)$ is \emph{induced} by $(\I,\epsilon)$. 
Since $C$ is birational, $h^b_\I$ is generically an isomorphism. Thus, 
if $(\I,\epsilon)$ is (strongly) nondegenerate, so is $(\I^b,\epsilon^\dagger)$. 
\end{subsct}

\begin{theorem}\label{rambir} 
Let $b\:C^\dagger\to C$ be a birational map between 
Gorenstein curves over an algebraically closed field of characteristic zero. 
Let $(\I,\epsilon)$ be a (generalized) linear system of rank $r$ 
on $C$, and $(\I^b,\epsilon^\dagger)$ the induced system on $C^\dagger$. Then
$$
R(\I,\epsilon)-b_*R(\I^b,\epsilon^\dagger)=(r+1)^2R_b-(r+1)R_b(\I).
$$ 
\end{theorem}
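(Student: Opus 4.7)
The plan is to split the argument into two independent pieces by reducing to the case where $\I$ is invertible, and then to analyze the defect between the defining injections.

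To start, choose an injection $\varphi\:\I\hookrightarrow\L$ into an invertible sheaf (as in Subsection \ref{tfrk1}). Pulling back and passing to the quotient by torsion yields an injection $\varphi^b\:\I^b\hookrightarrow b^*\L$. A naturality check on the adjunction unit shows that the system $(b^*\L,(\epsilon')^\dagger)$ induced on $C^\dagger$ from $(\L,\epsilon')$ coincides with the one obtained by composing $\epsilon^\dagger$ with $\Gamma(\varphi^b)$. Writing $Y$ and $Y^\dagger$ for the finite subschemes defined by $\varphi$ and $\varphi^b$, the definition in Subsection \ref{singramcyc} yields
\begin{equation}\label{reduction}
R(\I,\epsilon)-b_*R(\I^b,\epsilon^\dagger)=\bigl([W(\L,\epsilon')]-b_*[W(b^*\L,(\epsilon')^\dagger)]\bigr)-(r+1)\bigl([Y]-b_*[Y^\dagger]\bigr).
\end{equation}
Thus the theorem reduces to the invertible case $[W(\L,\epsilon')]-b_*[W(b^*\L,(\epsilon')^\dagger)]=r(r+1)R_b$, together with the cycle identity $[Y]-b_*[Y^\dagger]=R_b(\I)-R_b$.

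For the invertible case I would argue locally. Shrink to an open $U^\dagger\subseteq C^\dagger$ with image $U=b(U^\dagger)\subseteq C$, and choose generators $\sigma$ of $\L|_U$, $\mu$ of $\w_C|_U$ and $\mu^\dagger$ of $\w_{C^\dagger}|_{U^\dagger}$. By \eqref{Fw}, $\iota(\mu^\dagger)=c\mu$ with $c\in\Gamma(U,\F)$, and $b^\#c$ cuts out $Z^\dagger$ on $U^\dagger$. The commutativity of \eqref{WCC'} gives the key local identity
$$
b^\#\circ\partial=(b^\#c)\,\partial^\dagger\circ b^\#
$$
between the derivations attached to $\mu$ and $\mu^\dagger$. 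Setting $D:=(b^\#c)\partial^\dagger$, a short induction shows $D^ig\equiv(b^\#c)^i(\partial^\dagger)^ig$ modulo an $\O_{C^\dagger}$-linear combination of $(\partial^\dagger)^jg$ for $j<i$. Row operations on the Wronskian matrix then give
$$
b^\#\det(\partial^if_j)=(b^\#c)^{\binom{r+1}{2}}\det\bigl((\partial^\dagger)^ib^\#f_j\bigr),
$$
which globalizes to $b^*W(\L,\epsilon')=W(b^*\L,(\epsilon')^\dagger)+\binom{r+1}{2}Z^\dagger$ as Cartier divisors on $C^\dagger$. Taking associated cycles, applying $b_*$, and using both $b_*[b^*D]=[D]$ for a Cartier divisor $D$ on $C$ (a brief length computation from the conductor sequence) and $b_*[Z^\dagger]=2R_b$ from \eqref{FZZ}, the invertible-case claim follows.

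For the cycle identity I would invoke the commutative square with rows $\varphi\:\I\to\L$ and $b_*\varphi^b\:b_*\I^b\to b_*b^*\L$ and vertical arrows $h^b_\I$ and $h^b_\L$, all four of which are injective. The snake lemma yields a four-term exact sequence of torsion sheaves whose horizontal cokernels have cycles $[Y]$ and $b_*[Y^\dagger]$ (using $b_*[\F]=[b_*\F]$ for torsion $\F$) and whose vertical cokernels have cycles $R_b(\I)$ and $R_b$, respectively. Additivity of cycles in exact sequences of torsion sheaves then gives the identity at once. Plugging both results into \eqref{reduction} yields
$$
R(\I,\epsilon)-b_*R(\I^b,\epsilon^\dagger)=r(r+1)R_b-(r+1)(R_b(\I)-R_b)=(r+1)^2R_b-(r+1)R_b(\I).
$$
The main obstacle is the local Wronskian identity: one must simultaneously track how $\partial$ degenerates through the conductor into $\partial^\dagger$ and verify by row reduction that the discrepancy is precisely $c^{\binom{r+1}{2}}$, matching the power of $\w_C$ that houses the Wronskian section.
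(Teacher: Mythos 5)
Your proposal is correct and follows essentially the same route as the paper: reduce to the invertible case via an injection $\varphi\:\I\hookrightarrow\L$ and the commutative square relating $\varphi$, $\varphi^b$, $h^b_\I$ and $h^b_\L$ (your snake-lemma phrasing is just the paper's additivity-of-cycles computation), then handle the invertible case by the local identity $\partial=h\partial^\dagger$ coming from \eqref{Fw}, the row-reduction of the Wronskian yielding the factor $h^{\binom{r+1}{2}}$, and the relations $b_*[b^*D]=[D]$ and $b_*[Z^\dagger]=2R_b$. The only nitpick is that you should take $U\subseteq C$ open and set $U^\dagger:=b^{-1}(U)$ rather than starting from an arbitrary open $U^\dagger$, but this does not affect the argument.
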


\begin{proof} Keep the notations of Subsections \ref{bir} and \ref{birI}. 
Assume first that $\I$ is invertible. Set $\L:=\I$. 
Then we need to prove that
\begin{equation}\label{VLL}
R(\L,\epsilon)-b_*R(b^*\L,\epsilon^\dagger)=(r+1)rR_b.
\end{equation}

Since $C$ and $C^\dagger$ are Gorenstein, it follows from 
\eqref{Fw} that $\F\O_{C^\dagger}$ is 
invertible. In other words, $Z^\dagger$ is an effective divisor. 
Furthermore, $C$ can be covered by affine open subschemes $U$ 
for which $\w_C|_U$, $\L|_U$ and $\F\O_{C^\dagger}|_{U^\dagger}$ are trivial, where 
$U^\dagger:=b^{-1}(U)$. For each 
such $U$, let $\mu$ and $h$ be generators of $\w_C|_U$ and 
$\F\O_{C^\dagger}|_{U^\dagger}$, respectively. Set $\mu^\dagger=h\mu$. Then 
\eqref{Fw} implies that $\mu^\dagger$ is a generator of 
$\w_{C^\dagger}|_{U^\dagger}$. Let $\partial$ (resp. $\partial^\dagger$) be the 
$k$-linear derivations of 
$\Gamma(U,O_C)$ (resp. $\Gamma(U^\dagger,O_{C^\dagger})$) such that 
$\gamma df=\partial f\mu$ for each regular 
function $f$ on $U$ (resp. 
$\gamma^\dagger d^\dagger f=\partial^\dagger f\mu^\dagger$ 
for each regular function $f$ on $U^\dagger$). 
It follows from the commutativity of Diagram \eqref{WCC'} 
that $\partial f=h\partial^\dagger f$ for each regular 
function $f$ on $U$.

Let $\sigma$ be a generator for $\L|_U$. Fix a basis $\beta:=(v_0,\dots,v_r)$ of 
$V$. Then there are regular functions $f_0,\dots,f_r$ on $U$ such that 
$\epsilon(v_i)|_U=f_i\sigma$ for each $i=0,\dots,r$. 
Then $W(\L,\epsilon)$ is defined on 
$U$ by the zero scheme of
$$
w:=\begin{vmatrix}
f_0 & \dots & f_r\\
(h\partial^{\dagger}) f_0 & \dots & (h\partial^{\dagger}) f_r\\
\vdots & \ddots &\vdots\\
(h\partial^{\dagger})^rf_0 & \dots & (h\partial^{\dagger})^rf_r\\
\end{vmatrix}.
$$
Using the multilinearity of the determinant and 
the product rule for derivations, we 
get that 
$$
w=h^{\binom{r+1}{2}}w',\quad\text{where}\quad
w':=\begin{vmatrix}
f_0 & \dots & f_r\\
\partial^{\dagger} f_0 & \dots & \partial^{\dagger} f_r\\
\vdots & \ddots &\vdots\\
(\partial^{\dagger})^rf_0 & \dots & (\partial^{\dagger})^rf_r\\
\end{vmatrix}.
$$
But $w'$ is exactly the regular function defining the Weierstrass divisor of the 
induced linear system $(b^*\L,\epsilon^\dagger)$ on $U^\dagger$. It follows that
$$
b^*W(\L,\epsilon)=W(b^*\L,\epsilon^\dagger)+\binom{r+1}{2}Z^\dagger.
$$
Now, since $W(\L,\epsilon)$ is a Cartier divisor of $C$, we have that 
$$
b_*[b^*W(\L,\epsilon)]=[W(\L,\epsilon)].
$$
Thus, using \eqref{FZZ}, we get
$$
[W(\L,\epsilon)]-b_*[W(b^*\L,\epsilon^\dagger)]=\binom{r+1}{2}b_*[Z^\dagger]
=(r+1)rR_b,
$$
proving \eqref{VLL}.

We will now tackle the general case, where $\I$ is not assumed invertible. 
Let $\varphi\:\I\hookrightarrow\L$ be an injection into an invertible 
sheaf $\L$. Then $\varphi$ induces an injection 
$\varphi^\dagger\:\I^b\hookrightarrow b^*\L$. 
Clearly, we have a commutative diagram 
of injections:
\begin{equation}\label{CDIL}
\begin{CD}
\I @>h^b_\I >> b_*\I^b\\
@V\varphi VV @Vb_*\varphi^\dagger VV\\
\L @>h^b_\L >> b_*b^*\L.
\end{CD}
\end{equation}

Let $Y$ be the closed subscheme of $C$ such that $\varphi(\I)=\I_{Y/C}\L$. Let 
$Y^\dagger:=b^{-1}(Y)$. Then 
$\varphi^\dagger(\I^b)=\I_{Y^\dagger/C^\dagger}b^*\L$. Now, 
$$
b_*[Y^\dagger]=b_*[\text{Coker}(\varphi^\dagger)]=
[\text{Coker}(b_*\varphi^\dagger)].
$$
Thus, from the commutativity of \eqref{CDIL},
\begin{equation}\label{RRI}
\begin{aligned}
b_*[Y^\dagger]-[Y]=&[\text{Coker}(b_*\varphi^\dagger)]-[\text{Coker}(\varphi)]\\
=&[\text{Coker}(h^b_\L)]-[\text{Coker}(h^b_\I)]\\
=&R_b-R_b(\I).
\end{aligned}
\end{equation}

From the definitions of the Weierstrass cycles we have:
$$
R(\I,\epsilon)-b_*R(\I^b,\epsilon^\dagger)=R(\L,\delta)-b_*R(b^*\L,\delta^\dagger)+
(r+1)(b_*[Y^\dagger]-[Y]),
$$
where $\delta:=\Gamma(\varphi)\epsilon$ and 
$\delta^\dagger:=\Gamma(\varphi^\dagger)\epsilon^\dagger$. 
Thus, using \eqref{VLL} and \eqref{RRI}, we get
\begin{align*}
R(\I,\epsilon)-b_*R(\I^b,\epsilon^\dagger)=&(r+1)rR_b+(r+1)(R_b-R_b(\I))\\
=&(r+1)^2R_b-(r+1)R_b(\I).
\end{align*}
\end{proof}

\begin{remark}\label{rmk1}\setcounter{equation}{0} Formula \eqref{VLL}, a special 
case of Proposition \ref{rambir}, was obtained in \cite{GL}, Prop.~1.6 , p.~4845.
\end{remark}

\section{The intrinsic Weierstrass scheme}\label{invramsch}

\begin{proposition}\label{conductor}\setcounter{equation}{0} 
Let $C$ be a Gorenstein curve. 
Let $b\:C^\dagger\to C$ be a birational map of curves and $\F$ its conductor 
ideal. Let $\I$ be a torsion-free, rank-$1$ sheaf on $C$ such that 
$Hom(\I,\O_C)^b$ is invertible. 
Then there are an invertible sheaf $\L$ on $C$ and an injection 
$\varphi\:\I\hookrightarrow \L$ such that 
$\varphi(\I)\supseteq\F\L$.
\end{proposition}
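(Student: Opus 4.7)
The plan is to construct $\L$ as an invertible subsheaf of the natural intermediate sheaf $b_*\mathcal{K}^\dagger$, where $\mathcal{K}^\dagger := Hom_{C^\dagger}(\J^b, \O_{C^\dagger}) = (\J^b)^{-1}$ is the invertible sheaf on $C^\dagger$ dual to the hypothesized invertible $\J^b$. Once $\L$ is sandwiched between $\I$ and $b_*\mathcal{K}^\dagger$, the desired inclusion $\varphi(\I) \supseteq \F\L$ will follow from a natural annihilation statement.

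First I would construct a natural injection $\I \hookrightarrow b_*\mathcal{K}^\dagger$ and show its cokernel is annihilated by $\F$. Using the reflexivity $\I = Hom_C(\J, \O_C)$, valid on the Gorenstein curve $C$ by Subsection \ref{tfrk1sh}, the inclusion $\O_C \hookrightarrow b_*\O_{C^\dagger}$ induces $\I \hookrightarrow Hom_C(\J, b_*\O_{C^\dagger})$; adjunction together with the vanishing of $Hom$ from torsion into torsion-free sheaves identifies the target with $b_*Hom_{C^\dagger}(\J^b, \O_{C^\dagger}) = b_*\mathcal{K}^\dagger$. For the annihilation, I would work locally at $P \in C$: setting $R = \O_{C,P}$, $R' = (b_*\O_{C^\dagger})_P$, $M = \I_P$, $J = \J_P = M^{-1}$, and $f = \F_P$, one computes $(b_*\mathcal{K}^\dagger)_P = (JR')^{-1}_{R'} = \{\phi \in K : \phi \cdot JR' \subseteq R'\}$. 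For $c \in f$ and $\phi \in (JR')^{-1}_{R'}$, $c\phi \cdot J \subseteq c\phi \cdot JR' \subseteq cR' \subseteq R$, so $c\phi \in J^{-1} = M$ by reflexivity; hence $\F \cdot b_*\mathcal{K}^\dagger \subseteq \I$.

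Next I would produce an invertible $\L$ with $\I \subseteq \L \subseteq b_*\mathcal{K}^\dagger$. At each $P \in Z$, the hypothesis that $JR'$ is invertible over the semilocal ring $R'$ implies it is free of rank $1$, say $JR' = \eta R'$. By prime avoidance in the $k$-vector space $J$ (valid since the base field is infinite), I would pick $\xi \in J$ lying outside each proper $k$-subspace $\mathfrak{m}'_i \cdot JR' \cap J$, where $\mathfrak{m}'_1, \dots, \mathfrak{m}'_s$ are the maximal ideals of $R'$; such a $\xi$ generates $JR'$ over $R'$, i.e., $\xi R' = JR'$. The principal $R$-fractional ideal $L_P := \xi^{-1} R$ satisfies $M \subseteq L_P \subseteq (b_*\mathcal{K}^\dagger)_P$: the first inclusion because $\xi \in J$ gives $\xi M \subseteq R$, and the second because $\xi^{-1} \cdot JR' = R'$. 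I would globalize by defining $\L$ as the subsheaf of $b_*\mathcal{K}^\dagger$ whose stalk at $P \in Z$ is $L_P$ and whose stalk at $P \notin Z$ is $\I_P$. At $P \notin Z$, $\F_P = R$ forces $R' = R$, so $b$ is an isomorphism there and the hypothesis makes $\J$ (and hence $\I$ by reflexivity) invertible, so $\I_P = (b_*\mathcal{K}^\dagger)_P$ is already principal. Every stalk of $\L$ is then principal, so $\L$ is a coherent invertible subsheaf of $b_*\mathcal{K}^\dagger$.

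Taking $\varphi\: \I \hookrightarrow \L$ to be the inclusion gives $\varphi(\I) = \I \supseteq \F\L$, since $\F\L \subseteq \F \cdot b_*\mathcal{K}^\dagger \subseteq \I$ by the first step. The main obstacle is the prime-avoidance step: producing $\xi$ inside $J$ itself (not merely in $JR'$) that generates $JR'$ simultaneously at every maximal ideal of the semilocal $R'$. This is precisely where the invertibility hypothesis on $\J^b$ enters essentially, and it is what distinguishes this construction from the easier scenario in which $\I$ is already invertible.
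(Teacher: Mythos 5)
Your argument is correct and is essentially the paper's own proof read through the reflexivity $\I=Hom(Hom(\I,\O_C),\O_C)$: your element $\xi\in J$ with $\xi R'=JR'$ is exactly the paper's local generator $s_P$ of an invertible subsheaf $\M\subseteq Hom(\I,\O_C)$ satisfying $b^*\M=Hom(\I,\O_C)^b$, your $\L$ is $Hom(\M,\O_C)$, and your local verification that $\F\cdot b_*\mathcal{K}^\dagger\subseteq\I$ is the paper's observation that the image of $Hom(b^\#,\O_C)$ is $\F$. Both proofs rest on the same three ingredients --- Gorenstein reflexivity, prime avoidance over the (infinite, since algebraically closed) base field to pick a single generator of an invertible module over the semilocal ring $(b_*\O_{C^\dagger})_P$, and the defining property of the conductor --- so this is the same approach, merely presented in dualized, stalkwise form.
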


\begin{proof} Since $Hom(\I,\O_C)^b$ is invertible, for each $P\in C$ there is an 
element $s_P\in Hom(\I,\O_C)_P$ such that 
$$
s_P(b_*\O_{C^\dagger})_P=b_*(Hom(\I,\O_C)^b)_P.
$$
Let $\M$ be the subsheaf of $Hom(\I,\O_C)$ such that 
$\M_P=s_P\O_{C,P}$ for each $P\in C$. Then $\M$ is an invertible subsheaf 
of $Hom(\I,\O_C)$ satisfying $b^*\M=Hom(\I,\O_C)^b$. So, 
the natural injection $h^b_\M\:\M\hookrightarrow b_*b^*\M$ extends to an injection 
$Hom(\I,\O_C)\to b_*b^*\M$. (This argument appeared in the proof of 
\cite{abelpres}, Lemma 4.2, p.~5975.)

Since $C$ is Gorenstein,
$$
\I=Hom(Hom(\I,\O_C),\O_C).
$$
Thus, taking duals in the inclusion $\M\hookrightarrow Hom(\I,\O_C)$, 
we obtain an injection $\I\to Hom(\M,\O_C)$ whose image contains the 
image of $Hom(h^b_\M,\O_C)$. But, since $\M$ is invertible, $h^b_\M=b^\#\ox\M$, 
where $b^\#$ is the comorphism to $b$. Thus, the image of $Hom(h^b_\M,\O_C)$ is 
$$
\text{Im}(Hom(b^\#,\O_C))Hom(\M,\O_C).
$$
And the image of $Hom(b^\#,\O_C)$ is $\F$.
\end{proof}

\begin{subsct}\label{jets}\setcounter{equation}{0}
{\it Wronskians.} 
Let $C$ be a Gorenstein curve over an algebraically closed field $k$ of 
characteristic zero, and denote by $\w_C$ its 
sheaf of regular differentials. 
Let $\I$ be a torsion-free, rank-1 sheaf on $C$, and 
$\epsilon\:V\to\Gamma(C,\I)$ be a map of vector spaces. Assume 
$(\I,\epsilon)$ is strongly nondegenerate, and let $r$ denote its rank. 
 
Let $\varphi\:\I\hookrightarrow\L$ be an injection into an invertible 
sheaf $\L$. It induces a map 
$\varphi^{\ox n}\:\I^{\ox n}\to\L^{\ox n}$ for each integer $n>0$. 
Since $\L^{\ox n}$ is 
torsion-free, this map factors through an injection
 $\varphi^n\:\I^n\hookrightarrow\L^{\ox n}$, where $\I^n$ is $\I^{\ox n}$ modulo 
torsion.

Let $\epsilon'\:V\to\Gamma(C,\L)$ be the composition of $\epsilon$ with 
$\Gamma(\varphi)$. 
Let $w$ be a Wronskian of $(\L,\epsilon')$, a 
global section of $\L^{\ox r+1}\ox\w_C^{\ox\binom{r+1}{2}}$. 
We claim that $w$ factors through 
$$
\varphi^{r+1}\ox 1\:\I^{r+1}\ox\w_C^{\ox\binom{r+1}{2}}\lra \L^{\ox r+1}
\ox\w_C^{\ox\binom{r+1}{2}},
$$
yielding a global section of 
$\I^{r+1}\ox\w_C^{\ox\binom{r+1}{2}}$. We call this section a \emph{Wronskian} of 
$(\I,\epsilon)$.

To prove the claim, observe first that we may 
replace $\varphi$ by the composition $\lambda\varphi$, 
for any injection $\lambda\:\L\hookrightarrow\N$ into any invertible sheaf $\N$, 
since the composition of $w$ with the induced map, 
$$
\lambda^{\ox r+1}\ox 1\:\L^{\ox r+1}\ox\w_C^{\ox\binom{r+1}{2}}\lra 
\N^{\ox r+1}\ox\w_C^{\ox\binom{r+1}{2}},
$$
is a Wronskian of the induced $(\N,\epsilon'')$, where 
$\epsilon''$ is the composition of $\epsilon'$ with $\Gamma(\lambda)$. 
In particular, it follows from Lemma \ref{inj2} that 
the claimed factorization of $w$ occurs in general if it occurs for a 
particular injection $\varphi$. Also, it follows that the 
induced global section of $\I^{r+1}\ox\w_C^{\ox\binom{r+1}{2}}$ 
does not depend on the choice of $\varphi$; in other words, a Wronskian is 
well-defined modulo multiplication by $k^*$.

Thus, by Proposition \ref{conductor}, 
we may assume that $\varphi(\I)\supseteq\F\L$, where $\F$ is 
the conductor ideal of the normalization map $b\:C^\dagger\to C$. 
We proceed now as in the proof of Proposition \ref{rambir}. 
More precisely, cover $C$ by affine open 
subschemes $U$ such that $\w_C|_U$, $\L|_U$ and $\F\O_{C^\dagger}|_{U^\dagger}$ 
are trivial, where 
$U^\dagger:=b^{-1}(U)$. Let $\mu$ and $h$ be generators of $\w_C|_U$ and 
$\F\O_{C^\dagger}|_{U^\dagger}$, respectively. 
Let $\partial$ be the $k$-linear derivation of 
$\Gamma(U,\O_C)$ such that $\gamma df=\partial f\mu$ for each 
regular function $f$ on $U$. Then $\partial f=h\partial^\dagger f$ for each regular 
function $f$ on $U$, where $\partial^\dagger$ is a $k$-linear derivation of 
$\Gamma(U^\dagger,O_{C^\dagger})$.

Let $\sigma$ be a generator for $\L|_U$. 
Let $I\subseteq\Gamma(U,\O_C)$ be the ideal such that 
$\Gamma(U,\varphi(\I))=I\sigma$. Fix a basis $\beta:=(v_0,\dots,v_r)$ of 
$V$. Then there are $f_0,\dots,f_r\in I$ such that 
$\varphi\epsilon(s_i)|_U=f_i\sigma$ for each $i=0,\dots,r$. 

With the trivializations taken above, a wronskian of $(\L,\Gamma(\varphi)\epsilon)$ 
can be identified with
$$
u:=\begin{vmatrix}
f_0 & \dots & f_r\\
(h\partial^\dagger) f_0 & \dots & (h\partial^\dagger) f_r\\
\vdots & \ddots &\vdots\\
(h\partial^\dagger)^rf_0 & \dots & (h\partial^\dagger)^rf_r\\
\end{vmatrix}.
$$
We need to show that $u\in I^{r+1}$. 
Now, if $f$ is a regular function on $U$, then, by composition,  $f$ is also 
regular on $U^\dagger$; hence $h\partial^\dagger f\in\Gamma(U,\F)$, 
and in particular, 
$h\partial^\dagger f\in I$. 
Thus, all the entries of the matrix above are elements of $I$, and hence 
$w\in I^{r+1}$.
\end{subsct}

\begin{subsct}\label{zeroscheme}\setcounter{equation}{0}
{\it Zero schemes.} Let $C$ be a curve over an algebraically closed 
field $k$. Let $\I$ be a coherent sheaf on $C$ and $s$ a 
global section of $\I$.

We may view $s$ as a map $\sigma\:\O_C\to\I$. Taking duals, we obtain a map 
$\sigma^*\: Hom(\I,\O_C)\to\O_C$ 
whose image is the sheaf of ideals of a closed subscheme of $C$, 
which we denote by $Z_s$. If $s$ is generically nonzero, 
then the dual map has finite cokernel, that is, $Z_s$ is finite. 
We call $Z_s$ the \emph{zero scheme} of $s$. If $\I$ is invertible, 
then $Z_s$ is the usual zero scheme.

Assume $C$ is Gorenstein and $\I$ is torsion-free, rank-1. 
We claim that
$$
\deg[Z_s]=\deg\I.
$$
Indeed, $[Z_s]=[\text{Coker}(\sigma^*)]$. Let 
$\w_C$ be the sheaf of regular differentials 
of $C$. Since 
$$
Ext^1(\I,\O_C)=Ext^1(\I,\w_C)\ox\w_C^{-1}=0,
$$
where the first equality holds because $\w_C$ is invertible and the second by 
\eqref{Ext10}, we have that 
$$
\text{Coker}(\sigma^*)=Ext^1(\text{Coker}(\sigma),\O_C).
$$
Now, for any coherent sheaf $\G$ on $C$ with finite support,
$$
Hom(\G,\O_C)=Ext^2(\G,\O_C)=0
$$
by local duality. Since any such $\G$ can be viewed as an extension of a 
skyscraper sheaf of length 1 by a sheaf of smaller length, and since 
$Ext^1(k,\O_{C,P})\cong k$ for each $P\in C$, because $C$ is Gorenstein, 
it follows that
$$
[Ext^1(\G,\O_C)]=[\G].
$$ 
Then 
\begin{equation}\label{CCoker}
[\text{Coker}(\sigma^*)]=[\text{Coker}(\sigma)].
\end{equation}
Finally, $\deg[\text{Coker}(\sigma)]=\deg\I$, proving the claim.
\end{subsct}

\begin{subsct}\label{jetsintr}\setcounter{equation}{0}
{\it The intrinsic Weierstrass scheme.} Keep the setup of 
Subsection~\ref{jets}. Since $(\I,\epsilon)$ is strongly nondegenerate, 
a Wronskian is generically nonzero, and hence its zero scheme is a 
finite subscheme of $C$ whose associated cycle has degree  
$$
\deg\I^{r+1}+(r+1)r(g-n),
$$
where $\I^{r+1}$ is $\I^{\ox r+1}$ modulo torsion, and 
$n$ is the number of connected components of $C$. 
We denote this subscheme by $Z(\I,\epsilon)$ and call it 
the \emph{intrinsic Weierstrass scheme} of $(\I,\epsilon)$.
\end{subsct}

\begin{subsct}\label{defect}\setcounter{equation}{0}
{\it The $n$-th defect.} Let $C$ be a curve and $\I$ a 
torsion-free, rank-1 sheaf on $C$. Let 
$\varphi\:\I\hookrightarrow\L$ be an injection into an invertible sheaf $\L$. 
Let $Y\subset C$ 
be the subscheme such that $\varphi(\I)=\I_{Y/C}\L$. 
For each integer $n>0$, let 
$\varphi^n\:\I^n\to\L^{\ox n}$ be the induced injection, 
where $\I^n$ is $\I^{\ox n}$ 
modulo torsion. Then $\text{Im}(\varphi^n)=\I_{Y/C}^n\L^{\ox n}$. 
We let $Y^n\subset C$ 
be the closed subscheme defined by $\I_{Y/C}^n$, or equivalently, 
by the property that
$\varphi^n(\I^n)=\I_{Y^n/C}\L^{\ox n}$. Set
$$
\Delta^n(\I):=[Y^n]-n[Y].
$$
We call $\Delta^n(\I)$ the \emph{$n$-th defect} of $\I$. Of course, 
the $Y^n$ are divisors and $Y^n=nY$ 
where $\I$ is invertible. Thus the $n$-th defect 
is supported on the singular locus of $C$. 

Given an injection $\lambda\:\L\hookrightarrow\N$ into an invertible sheaf $\N$, 
let $Z$ be the subscheme 
of $C$ such that $\lambda(\L)=\I_{Z/C}\N$. Since $\L$ and $\N$ are invertible, 
$Z$ is an effective divisor. Thus 
$$
(\lambda\varphi)^n(\I)=\lambda^{\ox n}\varphi^n(\I)=\I_{Y^n/C}\I_{Z/C}^n\N^{\ox n}.
$$
The subscheme given by $\I_{Y^n/C}\I_{Z/C}^n$ has associated cycle $[Y^n]+n[Z]$. 
Thus, using Lemma \ref{inj2}, it follows that $\Delta^n(\I)$ does not depend 
on the choice of injection $\varphi$. 

\end{subsct}

\begin{proposition} Let $C$ be a Gorenstein curve 
over an algebraically closed field 
of characteristic $0$ and $(\I,\epsilon)$ a strongly 
nondegenerate (generalized) linear system 
of rank $r$ on $C$. Then
$$
R(\I,\epsilon)=[Z(\I,\epsilon)]+\Delta^{r+1}(\I).
$$
\end{proposition}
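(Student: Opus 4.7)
The plan is to exploit the factorization of the Wronskian $w\in\Gamma(C,\I^{r+1}\ox\w_C^{\ox\binom{r+1}{2}})$ of $(\I,\epsilon)$ through an injection $\varphi\colon\I\hookrightarrow\L$ into an invertible sheaf $\L$, chosen so that $\varphi(\I)=\I_{Y/C}\L$. Write $\epsilon'$ for the induced linear system map and $w'\in\Gamma(C,\L^{\ox r+1}\ox\w_C^{\ox\binom{r+1}{2}})$ for the Wronskian of $(\L,\epsilon')$. By the construction in Subsection~\ref{jets}, $w'=(\varphi^{r+1}\ox 1_{\w_C^{\ox\binom{r+1}{2}}})(w)$; equivalently, the maps $\sigma_w\colon\O_C\to\I^{r+1}\ox\w_C^{\ox\binom{r+1}{2}}$ and $\sigma_{w'}\colon\O_C\to\L^{\ox r+1}\ox\w_C^{\ox\binom{r+1}{2}}$ corresponding to $w$ and $w'$ satisfy $\sigma_{w'}=(\varphi^{r+1}\ox 1)\circ\sigma_w$.

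I would next translate both zero schemes into cokernels via Equation~\eqref{CCoker} of Subsection~\ref{zeroscheme}: since $\w_C^{\ox\binom{r+1}{2}}$ is invertible, both $w$ and $w'$ are generically nonzero sections of torsion-free, rank-$1$ sheaves on the Gorenstein curve $C$, so
$$
[Z(\I,\epsilon)]=[\text{Coker}(\sigma_w)] \qquad\text{and}\qquad [W(\L,\epsilon')]=[\text{Coker}(\sigma_{w'})].
$$
Because $\varphi^{r+1}\ox 1$ is injective and $\text{Im}(\sigma_{w'})=(\varphi^{r+1}\ox 1)(\text{Im}(\sigma_w))$, quotienting the short exact sequence $0\to\I^{r+1}\ox\w_C^{\ox\binom{r+1}{2}}\to\L^{\ox r+1}\ox\w_C^{\ox\binom{r+1}{2}}\to\text{Coker}(\varphi^{r+1}\ox 1)\to 0$ by the image of $\sigma_w$ on the left and $\sigma_{w'}$ in the middle yields a short exact sequence
$$
0\lra\text{Coker}(\sigma_w)\lra\text{Coker}(\sigma_{w'})\lra\text{Coker}(\varphi^{r+1}\ox 1)\lra 0.
$$
Since $\w_C^{\ox\binom{r+1}{2}}$ is invertible, the right-hand term has the same cycle as $\text{Coker}(\varphi^{r+1})$, which by the definition of $Y^{r+1}$ in Subsection~\ref{defect} is precisely $[Y^{r+1}]$.

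Additivity of length over this sequence gives $[W(\L,\epsilon')]=[Z(\I,\epsilon)]+[Y^{r+1}]$. Subtracting $(r+1)[Y]$ from both sides and applying the definitions $R(\I,\epsilon)=[W(\L,\epsilon')]-(r+1)[Y]$ from Subsection~\ref{singramcyc} and $\Delta^{r+1}(\I)=[Y^{r+1}]-(r+1)[Y]$ from Subsection~\ref{defect} then yields the claimed identity. The only mildly delicate point is the exactness of the above sequence on the left: an element of $\I^{r+1}\ox\w_C^{\ox\binom{r+1}{2}}$ whose image under $\varphi^{r+1}\ox 1$ lies in $\text{Im}(\sigma_{w'})$ must itself lie in $\text{Im}(\sigma_w)$, and this is immediate from the injectivity of $\varphi^{r+1}\ox 1$. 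Independence of the chosen injection $\varphi$ is automatic, as both $R(\I,\epsilon)$ (by Proposition~\ref{indepI}) and $\Delta^{r+1}(\I)$ (by Subsection~\ref{defect}) are intrinsic.
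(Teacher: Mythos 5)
Your proposal is correct and follows essentially the same route as the paper: factor the Wronskian $w'$ of $(\L,\epsilon')$ through $\varphi^{r+1}\ox 1$, use Equation~\eqref{CCoker} to identify $[Z(\I,\epsilon)]$ with $[W(\L,\epsilon')]-[Y^{r+1}]$, and then unwind the definitions of $R(\I,\epsilon)$ and $\Delta^{r+1}(\I)$. The only difference is presentational — you make the comparison of cycles explicit via the short exact sequence of cokernels, where the paper states the identity directly in terms of the ideal $Hom(\I_{Y^{r+1}/C},\O_C)$ — but the underlying argument is the same.
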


\begin{proof} Let $\varphi\:\I\hookrightarrow\L$ be 
an injection into an invertible sheaf $\L$. Let $Y\subset C$ 
be the subscheme such that $\varphi(\I)=\I_{Y/C}\L$. 
Let $Y^{r+1}\subset C$ be defined 
by $\I^{r+1}_{Y/C}$. Then  
$$
[Z(\I,\epsilon)]=[W(\L,\epsilon)]-\Big[\frac{Hom(\I_{Y^{r+1}/C},\O_C)}{\O_C}\Big]
=[W(\L,\epsilon)]-[Y^{r+1}],
$$
where the last equality follows from \eqref{CCoker}. Since 
$$
R(\I,\epsilon)=[W(\L,\epsilon)]-(r+1)[Y],
$$
the statement follows.
\end{proof}

\section{Families}\label{degtorfree}

\begin{subsct}\label{famcur}\setcounter{equation}{0}
{\it Families of curves and sheaves.} Let $\pi\:C\to S$ be a projective, 
flat map whose geometric fibers 
are curves. We call $\pi$ a \emph{family of curves}. 

Let $\I$ be a coherent sheaf on $C$ which is flat over $S$, 
and restricts to a torsion-free, 
rank-1 sheaf on every geometric fiber of $\pi$. We call $\I$ a 
\emph{family of torsion-free, rank-1 sheaves} along $\pi$. 
Of course, an invertible sheaf on 
$C$ is a family of torsion-free, rank-1 sheaves along $\pi$. 
We say that $\I$ has \emph{relative degree} $d$ 
if the restriction of $\I$ to each geometric fiber of $\pi$ has degree $d$. 
By flatness, if $S$ is connected, then $\I$ has a relative degree. 
For each geometric point $s$ of $S$, set $\I(s):=\I|_{C(s)}$.
\end{subsct}

\begin{proposition}\label{ILs} 
Let $\pi\:C\to S$ be a family of Gorenstein 
curves, $\I$ a family of torsion-free, rank-1 sheaves along $\pi$ and $\L$ an 
invertible sheaf on $C$. Then $Hom(\I,\L)$ is a family of torsion-free, rank-1 
sheaves along $\pi$. Furthermore, 
if $s$ is a geometric point of $S$ then the natural map 
$$
Hom(\I,\L)(s)\lra Hom(\I(s),\L(s))
$$
is an isomorphism. Also, if $\I$ has relative degree $d$ and $\L$ has relative 
degree $e$, then $Hom(\I,\L)$ has relative degree $e-d$.
\end{proposition}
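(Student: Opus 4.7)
The plan is to reduce to $\L=\O_C$ by tensoring, to derive $S$-flatness of $Hom(\I,\O_C)$ and commutation of its formation with base change from the fiberwise vanishing of higher $Ext$ supplied by the Gorenstein hypothesis, and then to read off the fiberwise description and the degree formula from Subsection~\ref{tfrk1sh}.

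For the reduction: since $\L$ is invertible, the canonical isomorphism $Hom(\I,\L)\cong Hom(\I,\O_C)\ox\L$ preserves $S$-flatness, commutes with base change, and shifts the relative degree by $e$ by \eqref{degdeg}. So it suffices to prove the proposition for $\L=\O_C$ with relative degree $-d$.

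The crucial input is the fiberwise vanishing $Ext^i(\I(s),\O_{C(s)})=0$ for every geometric point $s$ of $S$ and every $i\geq 1$. This follows from \eqref{Ext10}, which gives the analogous vanishing with $\w_{C(s)}$ in place of $\O_{C(s)}$, combined with the invertibility of $\w_{C(s)}$ in the Gorenstein case: one tensors with $\w_{C(s)}^{-1}$. Given this fiberwise vanishing, the standard cohomology-and-base-change result for sheaf $Ext$ yields simultaneously that $Hom(\I,\O_C)$ is $S$-flat and that the base-change map
$$
Hom(\I,\O_C)(s)\lra Hom(\I(s),\O_{C(s)})
$$
is an isomorphism for every $s$. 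Fiberwise, the sheaf $Hom(\I(s),\O_{C(s)})$ is then torsion-free rank-$1$ on the Gorenstein curve $C(s)$ by the reflexivity observations of Subsection~\ref{tfrk1sh}, and has degree $-d$ by \eqref{deg-deg}; reversing the reduction finishes the argument.

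The main obstacle is the base-change statement for sheaf $Hom$: naively dualizing a length-one finite presentation $\O_C^a\to\O_C^b\to\I\to 0$ does not immediately do the job, since on each fiber the cokernel of the dualized map is in general strictly larger than $Ext^1(\I(s),\O_{C(s)})$ --- for instance, this already fails for the maximal ideal of a node, whose minimal resolution is periodic. One therefore has to appeal to the derived formulation of cohomology-and-base-change applied to a full (possibly unbounded) projective resolution of $\I$, so that the fiberwise vanishing of \emph{all} the higher $Ext$'s, and not just of the first one, is genuinely exploited.
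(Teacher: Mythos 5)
Your proposal is correct and follows essentially the same route as the paper: the fiberwise vanishing $Ext^1(\I(s),\L(s))=0$ deduced from \eqref{Ext10} and the invertibility of $\w_{C(s)}$, followed by the cohomology-and-base-change machinery for sheaf $Ext$ (the paper cites \cite{AK}, Thms.~1.9 and 1.10, which is exactly the ``derived'' exchange result you invoke), and finally the degree count via \eqref{degdeg} and \eqref{deg-deg}. The preliminary reduction to $\L=\O_C$ is a cosmetic difference only.
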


\begin{proof} As pointed out in Subsection \ref{tfrk1sh}, namely \eqref{Ext10},
$$
Ext^1(\I(s),\w_{C(s)})=0
$$
for every geometric point $s$ of $S$, where $\w_{C(s)}$ is the 
sheaf of regular differentails of $C(s)$. Since $C(s)$ is Gorenstein, 
$\w_{C(s)}$ is invertible, and thus
$$
Ext^1(\I(s),\L(s))=Ext^1(\I,\w_{C(s)})\ox\w_{C(s)}^{-1}\ox\L(s)=0.
$$
Hence $Ext^1(\I,\L)=0$ by \cite{AK}, Thm.~1.10, p.~61.
It follows now from \cite{AK}, Thm.~1.9, p.~59, that 
$Hom(\I,\L)$ is flat over $S$ and the natural map
\begin{equation}\label{Homs}
Hom(\I,\L)(s)\lra Hom(\I(s),\L(s))
\end{equation}
is an isomorphism for every geometric point $s$ of $S$. Thus $Hom(\I,\L)$ 
is a family of torsion-free, rank-1 sheaves along $\pi$. Finally, since
$$
Hom(\I(s),\L(s))=Hom(\I(s),\O_{C(s)})\ox\L(s),
$$
the last statement of the proposition follows from the isomorphism \eqref{Homs} and 
Equations \eqref{degdeg} and \eqref{deg-deg}.
\end{proof}

\begin{subsct}\label{zeroschemeII}\setcounter{equation}{0}
{\it Zero schemes, II.} Let $\pi\:C\to S$ be a family of 
Gorenstein curves. Let $\I$ be a family of torsion-free, rank-1 sheaves of relative 
degree $d$ along $f$, and $s$ a global section of $\I$. 

As in Subsection \ref{zeroscheme}, the section $s$ corresponds to a map 
$\sigma\:\O_C\to\I$. The image of its dual, $\sigma^*\: Hom(\I,\O_C)\to\O_C$, 
is a sheaf of ideals of a closed subscheme of $C$, which we denote by $Z_s$. It 
follows from Proposition \ref{ILs} that the formation of $Z_s$ commutes 
with base change. So, if $s$ is nonzero at every generic point of every geometric 
fiber of $\pi$, then $\sigma^*$ is injective with $S$-flat cokernel; in other 
words, $Z_s$ is flat over $S$ of relative length $d$.
\end{subsct}

\begin{lemma}\label{lemIL} Let $\pi\:C\to S$ be a family of Gorenstein 
curves and $\I$ a family of torsion-free, 
rank-1 sheaves along $\pi$. 
Then, for each $s\in S$ whose residue field $\kappa(s)$ is infinite, 
there are an open neighborhood $U$ of $s$ in $S$ 
and an injection $\I|_{\pi^{-1}(U)}\hookrightarrow\L$ with $S$-flat cokernel, 
where $\L$ is an invertible sheaf on $\pi^{-1}(U)$. 
\end{lemma}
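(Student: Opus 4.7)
The plan is to construct $\L$ as a sufficiently positive twist $\O_C(m)|_{\pi^{-1}(U)}$ of the structure sheaf and to produce the injection from a global section of $Hom(\I,\O_C(m))$ that is nonzero at every generic point of every fiber of $\pi$ over $U$. Since $\pi$ is projective, fix a $\pi$-ample invertible sheaf $\O_C(1)$ on $C$. By Proposition \ref{ILs}, the sheaf $\F_m:=Hom(\I,\O_C(m))$ is a family of torsion-free, rank-1 sheaves along $\pi$. For $m$ large enough, Serre vanishing on $C(s)$ gives $H^1(C(s),\F_m(s))=0$ together with global generation of $\F_m(s)$; by semicontinuity and Cohomology and Base Change, there is a neighborhood $U'$ of $s$ on which $\pi_*\F_m$ is locally free and its formation commutes with base change.

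I would then exploit that $\kappa(s)$ is infinite to find the desired section. At each of the finitely many generic points $\xi$ of $C(s)$, global generation of $\F_m(s)$ at $\xi$ implies that the sections vanishing at $\xi$ form a proper $\kappa(s)$-linear subspace of $H^0(C(s),\F_m(s))$. Since a finite union of proper subspaces of a vector space over an infinite field cannot be the whole space, there exists $\tau\in H^0(C(s),\F_m(s))$ nonzero at every generic point of $C(s)$. Lift $\tau$ (after shrinking $U'$ to a smaller open $U\ni s$) to $\sigma\in\Gamma(\pi^{-1}(U),\F_m)$; by adjunction, $\sigma$ is a homomorphism $\varphi\:\I|_{\pi^{-1}(U)}\to\L$, where $\L:=\O_C(m)|_{\pi^{-1}(U)}$, whose restriction to $C(s)$ is $\tau$ and is therefore injective because $\I(s)$ is torsion-free rank-1.

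The only subtle step is to upgrade injectivity on the central fiber to injectivity on every nearby fiber. The support $T\subseteq\pi^{-1}(U)$ of $\text{Coker}(\varphi)$ meets $C(s)$ in a finite set, so upper semicontinuity of fiber dimension for the proper map $\pi|_T$ allows me to shrink $U$ further until every fiber of $T$ over $U$ is $0$-dimensional. Equivalently, $\varphi(t)\:\I(t)\to\L(t)$ is an isomorphism at every generic point of $C(t)$, and hence injective, for every $t\in U$.

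Finally, from the short exact sequence $0\to\I\to\L\to\text{Coker}(\varphi)\to 0$ and the $S$-flatness of $\I$ and $\L$, the long $\text{Tor}$ sequence yields an injection $\text{Tor}_1^{\O_S}(\text{Coker}(\varphi),\kappa(t))\hookrightarrow\ker(\varphi(t))=0$ for every $t\in U$. The local criterion of flatness then gives the $S$-flatness of $\text{Coker}(\varphi)$ over $U$. The main obstacle is really just the semicontinuity step of the previous paragraph; everything else adapts the single-curve construction of Subsection \ref{tfrk1}, with the infiniteness of $\kappa(s)$ replacing an auxiliary blow-up by the choice of a single sufficiently generic section.
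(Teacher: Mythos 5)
Your proposal is correct and follows essentially the same route as the paper: twist by a large power of a relatively ample sheaf so that $Hom(\I,\O_C(m))$ is fiberwise globally generated with surjective base-change map, use the infinitude of $\kappa(s)$ to pick a section nonzero at every generic point of $C(s)$, lift it to a neighborhood, and shrink to get injectivity with $S$-flat cokernel. You merely spell out in more detail the two steps the paper leaves implicit (the avoidance of finitely many proper subspaces, and the openness/flatness argument via the cokernel support and the local criterion).
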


\begin{proof} Let $\O_C(1)$ be a relatively ample sheaf on $C$ over $S$. 
For each integer $m$ sufficiently large, $Hom(\I(s),\O_C(m)(s))$ is generated 
by global sections and the base-change map 
\begin{equation}\label{Homs2}
\text{Hom}(\I,\O_C(m))(s) \lra \Gamma(C(s),Hom(\I,\O_C(m))(s))
\end{equation}
is surjective. Pick such $m$. Since $\kappa(s)$ is infinite, there 
is a section of $Hom(\I(s),\O_C(m)(s))$ which is nonzero at every generic point of 
$C(s)$. This section corresponds to an injection 
$\I(s)\hookrightarrow\O_C(m)(s)$. Since \eqref{Homs} is an isomorphism for 
$\L:=\O_C(m)$, and 
\eqref{Homs2} is surjective, up to replacing $S$ by an open neighborhood of $s$, we 
may assume that the injection lifts to a map $\varphi\:\I\to\O_C(m)$. Since 
$\varphi(s)$ is injective, up to replacing $S$ by an open neighborhood of $s$, it 
follows that $\varphi$ is injective with $S$-flat cokernel.
\end{proof}

\begin{proposition}\label{proplim} 
Let $S$ be the spectrum of a discrete valuation ring with infinite 
residue field. 
Let $\pi\:C\to S$ be a family of Gorenstein curves and $\I$ a family 
of torsion-free, rank-1 sheaves along $\pi$. 
If the restriction of $\I$ to the generic 
fiber of $\pi$ is invertible, then the $n$-th defect of the 
restriction of $\I$ to each 
geometric fiber of $\pi$ is nonnegative, for each integer $n>0$.
\end{proposition}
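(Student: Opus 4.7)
The plan is to realize $Y$ and $Y^n$ on every fiber as restrictions of a single family $Y\subset C$ (flat over $S$) and its thickening $Y^n\subset C$ (generally not flat), and then to read off the desired inequality from a basic length comparison over a DVR.

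First, Lemma \ref{lemIL} applies because $S$ is local with infinite residue field, and its open neighborhood may be taken to be all of $S$. It yields a global injection $\varphi\:\I\hookrightarrow\L$ with $\L$ invertible and $S$-flat cokernel $\O_Y$. Thus $Y\to S$ is finite and flat, and on every geometric fiber $\varphi$ restricts to the injection that appears in the definition of $\Delta^n(\I(s))$. On the generic geometric fiber $\I$ is invertible by hypothesis, so $Y_\eta$ is a Cartier divisor, $Y_\eta^n=nY_\eta$ as subschemes, and $\Delta^n(\I(\eta))=0$. Because $\Delta^n$ commutes with arbitrary field base change (ideals, their powers, and flat base change all interact correctly) and because pullbacks of nonnegative cycles remain nonnegative, it suffices to show $\Delta^n(\I(0))\geq 0$ on the scheme-theoretic closed fiber $C_0$.

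The remaining task is the pointwise inequality $\mult_P(Y_0^n)\geq n\mult_P(Y_0)$ at each closed point $P\in Y_0$. Localize: set $A:=\O_{C,P}$ and $I:=\I_{Y/C,P}\subset A$. Since $A/I=\O_{Y,P}$ is $S$-flat and finite, it is $\O_S$-free of rank $\mult_P(Y_0)$. The subscheme $Y^n:=V(\I_{Y/C}^n)$ has the same support as $Y$, so it is $S$-finite, and hence $M:=A/I^n$ is a finitely generated $\O_S$-module. Let $t\in\O_S$ be a uniformizer, and let $k$, $K$ denote the residue and fraction fields. The surjection $A\twoheadrightarrow A/tA=\O_{C_0,P}$ sends $I$ onto $\I_{Y_0/C_0,P}$, and therefore $I^n$ onto $\I_{Y_0/C_0,P}^n$, so $M/tM=\O_{C_0,P}/\I_{Y_0/C_0,P}^n$ and $\dim_k(M/tM)=\mult_P(Y_0^n)$. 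On the generic side, $I_\eta\subset A_\eta$ is an invertible ideal, so $\dim_K(A_\eta/I_\eta^n)=n\dim_K(A_\eta/I_\eta)$; combining with the $\O_S$-rank of $A/I$ gives $\dim_K(M\ox_{\O_S}K)=n\mult_P(Y_0)$.

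The proof then closes with the structure theorem for finitely generated modules over a DVR: for any such $N$ one has $\dim_k(N/tN)\geq\dim_K(N\ox_{\O_S}K)$, with equality exactly when $N$ is $\O_S$-free. Applied to $M$, this gives $\mult_P(Y_0^n)\geq n\mult_P(Y_0)$, so $\Delta^n(\I(0))\geq 0$ as required. The main obstacle is keeping the identifications straight at the two fibers and verifying that $\I_{Y/C}^n$ restricts to $\I_{Y_0/C_0}^n$ on the closed fiber; this last point is a formality once one notes that passing to the image under a ring surjection commutes with taking products of ideals.
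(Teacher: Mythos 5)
Your core computation is correct, and it takes a genuinely different route from the paper's. Where the paper constructs an $S$-flat limit $Z$ of the divisors $(n)Y(u)$ (as the kernel of a surjection $\L^{\ox n}\to\H$ through which $\varphi^{\ox n}$ factors generically), observes $Z(t)\subseteq Y^n_t$, and invokes an external result ([E3], Prop.~3.4) to conclude $[Z(t)]=n[Y(t)]$, you replace all of this by the elementary semicontinuity $\dim_k(N/tN)\geq\dim_K(N\ox_{\O_S}K)$ for a finitely generated module $N$ over the valuation ring, applied to $N=\O_{C,P}/\I_{Y/C,P}^n$. These are two packagings of the same phenomenon --- the $\O_S$-torsion of $\O_C/\I_{Y/C}^n$ is exactly what makes $Y^n$ on the special fiber larger than the flat limit of $nY(u)$ --- but yours is self-contained, and the identifications you need (that $I^n$ surjects onto $\I_{Y_0/C_0,P}^n$, that $\dim_K(A_\eta/I_\eta^n)=n\dim_K(A_\eta/I_\eta)$ because the invertible ideal $I_\eta$ of the semi-local ring $A_\eta$ is generated by a nonzerodivisor, and that $A/I$ is $\O_S$-free of rank $\mult_P(Y_0)$) are all correctly handled.

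The one genuine gap is the reduction to the scheme-theoretic closed fiber $C_0$ over the residue field $k$, which is only assumed infinite, not algebraically closed. The proposition concerns the defect on a \emph{geometric} fiber $C_0\times_k\Omega$, and nonnegativity of a $0$-cycle is a pointwise condition. A closed point $P$ of $C_0$ may split into several points $Q_1,\dots,Q_m$ of $C_0\times_k\Omega$, and base change of the finite schemes $Y_0$ and $Y_0^n$ only gives $\sum_i\mult_{Q_i}(Y_0^n\times_k\Omega)=\mult_P(Y_0^n)$ and likewise for $Y_0$; your inequality $\mult_P(Y_0^n)\geq n\mult_P(Y_0)$ therefore controls only the sum $\sum_i\big(\mult_{Q_i}(Y_0^n\times_k\Omega)-n\,\mult_{Q_i}(Y_0\times_k\Omega)\big)$, not each summand. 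Indeed, $[Y_0^n\times_k\Omega]-n[Y_0\times_k\Omega]$ is \emph{not} the flat pullback of $[Y_0^n]-n[Y_0]$ when residue field extensions are nontrivial, so the principle ``pullbacks of nonnegative cycles remain nonnegative'' does not apply as invoked. The repair is standard and short: choose a flat local extension of discrete valuation rings $\O_S\to\O_{S'}$ with $\mathfrak{m}_S\O_{S'}=\mathfrak{m}_{S'}$ and residue field $\Omega$ (EGA $0_{\mathrm{III}}$, 10.3.1), base-change the whole family to $S'$ --- all hypotheses are preserved, and the closed fiber of $C\times_SS'$ is now the geometric fiber in question --- and then run your localization argument verbatim at the points $Q_i$, which have become honest closed points of the total space.
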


\begin{proof} Let $u$ be the generic point 
and $s$ the special point of $S$. Of course, since $\I$ is invertible on $C(u)$, 
the restriction of $\I$ to any field extension of $C(u)$ has defect 0. 

By Lemma \ref{lemIL}, there is an injection $\varphi\:\I\to\L$ with 
$S$-flat cokernel into an invertible sheaf $\L$. Let $Y\subset C$ be the 
closed subscheme such that $\varphi(\I)=\I_{Y/C}\L$. Since $\L$ is invertible, 
$Y$ is flat over $S$.

For each integer $n>0$ consider the induced 
map $\varphi^{\ox n}\:\I^{\ox n}\to\L^{\ox n}$. 
Since $S$ is the spectrum of a discrete valuation 
ring, there is a surjection $\rho\:\L^{\ox n}\to\H$ onto an $S$-flat 
coherent sheaf $\H$ such that 
\begin{equation}\label{defcplx}
\begin{CD}
0 @>>> \I^{\ox n} @>\varphi^{\ox n}>> \L^{\ox n} @>\rho >> \H @>>> 0
\end{CD}
\end{equation}
is a complex which is exact on $C(u)$. Let $Z\subset C$ be the 
closed subscheme such that $\I_{Z/C}\L^{\ox n}=\text{Ker}(\rho)$. Since 
$\varphi(s)$ is generically bijective, $Z$ is finite over $S$. Also, since 
$\H$ is $S$-flat, so is $Z$.

Notice that, since \eqref{defcplx} is exact on $C(u)$ and $\I(u)$ is invertible, 
$Y(u)$ and $Z(u)$ are divisors satisfying $Z(u)=nY(u)$. It follows from 
\cite{E3}, Prop.~3.4, that $[Z(t)]=n[Y(t)]$, for any geometric point $t$ of $S$ 
above $s$. 
Furthermore, since \eqref{defcplx} is a complex, $Z(t)\subseteq Y^n_t$, where 
$Y^n_t$ is the closed subscheme of $C(t)$ satisfying
$$
\varphi(t)^{\ox n}(\I(t)^{\ox n})=\I_{Y^n_t/C(t)}\L(t)^{\ox n}.
$$
(Equivalently, the sheaf of ideals of $Y^n_t$ is $\I^n_{Y(t)/C(t)}$.) Thus
$$
\Delta^n(\I(t))=[Y^n_t]-n[Y(t)]\geq [Z(t)]-n[Y(t)]=0.
$$
\end{proof}

\section{Degenerations of linear systems}\label{deglinsys}

\begin{subsct}\label{famlinsys}\setcounter{equation}{0}
{\it Families of linear systems.} Let $\pi\:C\to S$ be a family of 
curves. Let $\I$ be a family of torsion-free, rank-1 sheaves along $\pi$ 
of relative 
degree $d$. 
Let $\V$ be a locally free sheaf of constant rank $r+1$ on $S$, 
for a certain integer $r$, and $\epsilon\:\V\to \pi_*\I$ a map. We 
say that $(\I,\epsilon)$ is a \emph{family of} (generalized) 
\emph{linear systems} of degree 
$d$ and rank $r$ along $\pi$. The terminology is justified because 
for each geometric point $s$ 
of $S$ the composition
$$
\begin{CD}
\epsilon_s\:\V(s) @>\epsilon(s)>> \pi_*\I(s) @>>> \Gamma(C(s),\I(s)) 
\end{CD}
$$
gives rise to a (generalized) linear system of degree 
$d$ and rank $r$ on the fiber $C(s)$, 
where the second map is the base-change map. We call $(\I(s),\epsilon_s)$ the 
\emph{induced} (generalized) \emph{linear system} on $C(s)$. We say that 
$(\I,\epsilon)$ is 
(\emph{strongly}) \emph{nondegenerate} if the induced linear system on 
every geometric fiber of $\pi$ is 
(strongly) nondegenerate.
\end{subsct}

\begin{subsct}\label{relfundclass}\setcounter{equation}{0}
{\it The relative fundamental class.} Let $\pi\:C\to S$ be a family of curves. 
Let $\gamma\:\Omega^1_{C/S}\to\w_{C/S}$ be a map to a coherent 
$S$-flat sheaf $\w_{C/S}$ whose restriction to every geometric fiber is 
the fundamental class of that fiber. More precisely, $\gamma$ is 
assumed such that, for every geometric point $s$ of $S$, there is a 
commutative diagram of maps
$$
\begin{CD}
\Omega^1_{C/S}|_{C(s)} @>\gamma|_{C(s)}>> \w_{C/S}|_{C(s)}\\
@VVV @VVV\\
\Omega^1_{C(s)/\kappa(s)} @>>> \w_{C(s)},
\end{CD}
$$
where the vertical maps are isomorphisms, the left one canonical, and the bottom 
map is the fundamental class of $C(s)$. We call such a map a 
\emph{relative fundamental class} of $C/S$.

According to \cite{AE}, Thm.~III.1, p.~81, there is a map 
$\gamma\:\Omega^1_{C/S}\to\w_{C/S}$, the unique one satisfying a certain 
trace property. Actually, the target of the map in loc.~cit.~is a 
complex, the relative dualizing complex. 
However, in our case the dualizing complex can be 
replaced by a single sheaf; see \cite{Conrad}, Thm.~3.5.1, p.~155. 
The trace property, \cite{AE}, Def.~II.3, p.~79, 
asserts a certain compatibility between $\gamma$ and the trace map 
$h_*h^*\Omega^1_{X/S}\to\Omega^1_{X/S}$ for the sheaf of regular differentials 
$\Omega^1_{X/S}$ of 
a smooth family of curves $X/S$, if there is a finite and flat $S$-map 
$h\: C\to X$. The analogous compatibility condition 
holds when one restricts to a 
geometric fiber, by \cite{AE}, Propri\'et\'e 1, p.~78. 
This compatibility condition is also satisfied by the fundamental class of each 
geometric fiber; at least, this is shown for irreducible curves in 
\cite{Kunz}, Satz 5.6, p.~105.  Finally, \cite{AE}, Prop.~II.3.1, p.~79, 
asserts that the trace property is satisfied by a unique map, thus showing that 
$\gamma$ restricts to the fundamental class on each geometric fiber. 

Alternatively, it is shown in \cite{KW}, Thm.~5.26, p.~112, 
that a map $\gamma\:\Omega^1_{C/S}\to\w_{C/S}$ exists 
if $S$ has no embedded points and can be covered by affine open subschemes whose 
rings of functions are Noetherian, universally Japanese and 
universally catenary. There is not a clear statement in \cite{KW} to the 
effect that $\gamma$ restricts to the fundamental class on each geometric fiber, 
though Cor.~5.29, p.~116 and Prop.~4.36, p.~89 should imply this, at least if 
the geometric fibers of $\pi$ are irreducible. 

Also, if the geometric fibers of $C/S$ are locally complete intersections, 
then the existence of a map $\gamma\:\Omega^1_{C/S}\to\w_{C/S}$ was pointed out 
in \cite{E1}. Again, it is not shown in \cite{E1} that $\gamma$ restricts to the 
fundamental class on each geometric fiber. This can be derived from the 
fact that the construction given to $\gamma$ commutes with base change, and from 
\cite{Li}, Cor.~13.7, p.~114, though the latter is stated only for irreducible 
curves.

Finally, a map $\gamma\:\Omega^1_{C/S}\to\w_{C/S}$ is constructed in \cite{KK} 
without hypotheses on the basis $S$ or on the fibers of $\pi$. Again, it is 
not clearly stated whether this $\gamma$ restricts to the fundamental class on 
every geometric fiber of $f$, but \cite{KK} is a work in progress.

Since $\w_{C/S}$ restricts to the sheaf of regular differentials on each geometric 
fiber, it follows that $\w_{C/S}$ is a 
family of torsion-free, rank-1 sheaves along $\pi$. Furthermore, if the 
geometric fibers of $\pi$ are Gorenstein, then $\w_{C/S}$ is 
invertible. 
\end{subsct}

\begin{subsct}\label{famcurdiv}\setcounter{equation}{0}
{\it Weierstrass schemes.} Let $\pi\:C\to S$ be a family of 
Gorenstein curves. Let $\L$ be an invertible sheaf on $C$, and 
$\epsilon\:\V\to f_*\L$ a map from a locally free sheaf $\V$ of 
constant rank $r+1$, for a certain integer $r$. 
Let $\gamma\:\Omega^1_{C/S}\to\w_{C/S}$ be a relative fundamental class. 

We can associate to $(\L,\epsilon)$ a global section $w(\L,\epsilon)$ of 
\begin{equation}\label{LWV}
\L^{\ox r+1}\ox\w_{C/S}^{\ox\binom{r+1}{2}}\ox\pi^*\bigwedge^{r+1}\V^*,
\end{equation}
which we will call the \emph{Wronskian} of $(\L,\epsilon)$. Its 
zero scheme will be called the \emph{Weierstrass scheme} of $(\L,\epsilon)$.

The Wronksian is constructed in a way similar to that of the Weierstrass divisor 
of Subsection \ref{ramdiv}, as follows. Cover $S$ by  
open subschemes $U$ such that $\V|_U$ is trivial, and for each such $U$ 
cover $\pi^{-1}(U)$ by open subschemes $U'$ such that 
$\w_{C/S}|_{U'}$ and $\L|_{U'}$ 
are trivial. For each $U$, let $\beta=(v_0,\dots,v_r)$ be a basis of the free 
$\Gamma(U,\O_S)$-module $\Gamma(U,\V)$. And for each $U'$ let 
$\mu\in\Gamma(U',\w_{C/S})$ generating $\w_{C/S}|_{U'}$ and 
$\sigma\in\Gamma(U',\L)$ generating $\L|_{U'}$. Then there is a 
$\Gamma(U,\O_S)$-linear derivation $\partial$ of $\Gamma(U',\O_C)$ such that 
$\gamma df=\partial f\mu$ for each regular funtion $f$ on $U'$. And there are 
regular functions $f_0,\dots,f_n$ on $U'$ such that 
$\epsilon(v_i)|_{U'}=f_i\sigma$ for each $i=0,\dots,n$.

Form the wronskian determinant:
$$
w(\beta,\sigma,\mu):=
\begin{vmatrix}
f_0 & \dots & f_r\\
\partial f_0 & \dots & \partial f_r\\
\vdots & \ddots &\vdots\\
\partial^rf_0 & \dots & \partial^rf_r\\
\end{vmatrix}.
$$
As in Subsection \ref{ramdiv}, the multilinearity of the determinant and the 
product rule of derivations imply that the $w(\beta,\sigma,\mu)$ patch to a 
global section $w(\L,\epsilon)$ of the sheaf \eqref{LWV}.

It is also clear from the above description that the Wronskian of $w(\L,\epsilon)$ 
restricts to a Wronskian of $(\L(s),\epsilon_s)$ 
for each geometric point $s$ of $S$, 
once isomorphisms $\bigwedge^{r+1}\V(s)\risom\kappa(s)$ and 
$\w_{C/S}(s)\risom\w_{C(s)}$ are chosen, the latter such that its composition 
with $\gamma(s)$ is the fundamental class of $C(s)$. It follows that 
the Weierstrass scheme of $(\L,\epsilon)$ is a relative Cartier divisor over $S$, 
restricting to the Weierstrass divisor of $(\L(s),\epsilon_s)$ for each 
geometric point $s$ of $S$, if $(\L,\epsilon)$ 
is strongly nondegenerate.

Furthermore, functoriality holds for $w(\L,\epsilon)$: If $\varphi\:\L\to\M$ 
is a map to an invertible sheaf $\M$, and 
$\epsilon':=(\pi_*\varphi)\epsilon$, then 
$$
w(\M,\epsilon')=(\varphi^{\ox r+1}\ox 1)w(\L,\epsilon),
$$
where $1$ is the identity map of
$\w_{C/S}^{\ox\binom{r+1}{2}}\ox\pi^*\bigwedge^{r+1}\V^*$. 
\end{subsct}

\begin{theorem}\label{thm} 
Let $S$ be the spectrum of a discrete valuation ring with 
algebraically closed residue field of characteristic zero. Let $s$ and $u$ be 
its special and generic points, respectively. Let $\pi\:C\to S$ be a family of 
Gorenstein curves and $(\I,\epsilon)$ a family of strongly nondegenerate 
(generalized) linear systems along $\pi$. Assume that $\I(u)$ is invertible, and 
let $W\subset C$ be the schematic closure of the Weierstrass scheme of the 
linear system $(\I(u),\epsilon_u)$ induced on $C(u)$. Then $W(s)$ 
is a finite subscheme containing the intrinsic Weierstrass scheme 
$Z(\I(s),\epsilon_s)$ and 
whose associated cycle is $R(\I(s),\epsilon_s)$.
\end{theorem}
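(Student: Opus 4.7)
The plan is to use Lemma~\ref{lemIL} to embed $\I$ into an invertible sheaf on $C$, reducing to the invertible family handled by Subsection~\ref{famcurdiv}, and then to extract both assertions of the theorem from a careful comparison of the resulting family Weierstrass divisor with the schematic closure $W$, exploiting $S$-flatness throughout. First I would apply Lemma~\ref{lemIL} to obtain an injection $\varphi\:\I\hookrightarrow\L$ into an invertible sheaf $\L$ on $C$ with $S$-flat cokernel; since $S$ has only two points and $\kappa(s)$ is algebraically closed, the open neighborhood produced by the lemma may be taken to be all of $S$. Let $\epsilon':=\pi_*(\varphi)\circ\epsilon$ and let $Y\subset C$ be the closed subscheme with $\varphi(\I)=\I_{Y/C}\L$. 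Since $\L$ is invertible and $\varphi$ has $S$-flat cokernel, $Y$ is $S$-flat, with $Y(u)$ an effective Cartier divisor on $C(u)$. The pair $(\L,\epsilon')$ is a family of strongly nondegenerate linear systems along $\pi$ (because $\varphi$ is fiberwise generically an isomorphism), so Subsection~\ref{famcurdiv} produces a relative Weierstrass Cartier divisor $W(\L,\epsilon')\subset C$, $S$-flat and finite over $S$, restricting to $W(\L(t),\epsilon'_t)$ on each geometric fiber.

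For the cycle statement, applying the definition of $R(-,-)$ in Subsection~\ref{singramcyc} on the generic fiber, where $\I(u)$ is invertible, yields the cycle equation
$$
[W(\L(u),\epsilon'_u)]=[W(u)]+(r+1)[Y(u)]
$$
on $C(u)$. Since each of the three 1-cycles $[W(\L,\epsilon')]$, $[W]$, $[Y]$ on $C$ has $S$-flat support finite over $S$, these cycles are determined by their restrictions to $C(u)$; hence the equation lifts to
$$
[W(\L,\epsilon')]=[W]+(r+1)[Y]
$$
on $C$. Restricting to $C(s)$ (cycle restriction commutes with scheme restriction for $S$-flat cycles with support finite over $S$) gives $[W(\L(s),\epsilon'_s)]=[W(s)]+(r+1)[Y(s)]$, so $[W(s)]=R(\I(s),\epsilon_s)$ by the definition in Subsection~\ref{singramcyc}. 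Finiteness of $W(s)$ follows since $W$ is finite over $S$.

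For the scheme-theoretic containment $Z(\I(s),\epsilon_s)\subseteq W(s)$, I plan to establish the local colon-ideal identity
$$
\I_W=(w):\I_{Y/C}^{r+1}\quad\text{in}\ \O_C,
$$
where $w\in\O_C$ is the local expression of the Wronskian of $(\L,\epsilon')$ after trivializing $\L$, $\w_{C/S}$ and $\V$. The inclusion ``$\supseteq$'' uses the characterization $\I_W=\{f\in\O_C\,|\,f|_{C(u)}\in\I_{W(u)}\}$, available because $\O_C/\I_W$ is $S$-flat by construction of the schematic closure. The inclusion ``$\subseteq$'' uses the analogous characterization of $(w)$, valid because $\O_C/(w)$ is $S$-flat as $W(\L,\epsilon')$ is a relative Cartier divisor over $S$. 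Granted the identity, for any $\bar f\in\I_{W(s)}$ and a lift $f\in\I_W$, the inclusion $f\cdot\I_{Y/C}^{r+1}\subseteq(w)$ reduces modulo the uniformizer of $S$ to $\bar f\cdot\I_{Y(s)/C(s)}^{r+1}\subseteq(w(s))$ in $\O_{C(s)}$. By the local computation of the intrinsic Weierstrass scheme in Subsections~\ref{zeroscheme} and \ref{jets} applied to the single curve $C(s)$, the ideal of $Z(\I(s),\epsilon_s)$ equals $(w(s)):\I_{Y(s)/C(s)}^{r+1}$, so $\bar f$ lies in it.

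The main obstacle is the colon-ideal identity for $\I_W$: the argument hinges on exploiting two distinct $S$-flatness properties, of $\O_W$ and of $\O_C/(w)$, in order to characterize each ideal in terms of its restriction to the generic fiber. A further subtlety is that the ideal power $\I_{Y/C}^{r+1}$ can fail to be $S$-flat at singularities of $C(s)$, but its restriction to the special fiber still equals $\I_{Y(s)/C(s)}^{r+1}$ by right-exactness; once this is noted, the reduction modulo the uniformizer in the final step is routine.
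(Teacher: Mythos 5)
Your proof is correct, but it executes both halves of the theorem differently from the paper, even though it opens identically (Lemma~\ref{lemIL} gives $\varphi\:\I\hookrightarrow\L$ with $S$-flat cokernel, and everything is measured against the relative Weierstrass divisor of $(\L,\epsilon')$). The paper's proof introduces the auxiliary $S$-flat subscheme $Z\subset C$ with $\text{Im}(\varphi^{\ox r+1})\subseteq\I_{Z/C}\L^{\ox r+1}$, with equality over $u$, factors the relative Wronskian through a section $w$ of $\I_{Z/C}\L^{\ox r+1}\ox\w_{C/S}^{\ox\binom{r+1}{2}}$ whose zero scheme is $W$, derives the cycle identity from $[W(s)]=[W(\L(s),\epsilon'_s)]-[Z(s)]$ together with $[Z(s)]=(r+1)[Y(s)]$ (the latter quoted from \cite{E3}, Prop.~3.4), and gets the containment from $Z(s)\subseteq Y^{r+1}_s$. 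You dispense with $Z$ entirely: the cycle identity is obtained by specializing $[W(\L,\epsilon')]=[W]+(r+1)[Y]$ directly, and the containment by reducing the colon-ideal identity $\I_W=\I_{W(\L,\epsilon')}:\I_{Y/C}^{r+1}$ modulo the uniformizer; your identification of the ideal of $Z(\I(s),\epsilon_s)$ with $(w(s)):\I_{Y(s)/C(s)}^{r+1}$ is correct, since the zero scheme of the factored Wronskian, a section of the sheaf corresponding locally to the ideal $\I_{Y(s)/C(s)}^{r+1}$, is cut out by $w(s)\cdot\big(\O_{C(s)}:\I_{Y(s)/C(s)}^{r+1}\big)$, which coincides with the transporter because $w(s)$ is a nonzerodivisor lying in $\I_{Y(s)/C(s)}^{r+1}$ (compare the transporter ideal appearing in Example~\ref{example}). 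The one assertion you leave unjustified --- that for a closed subscheme $X\subset C$ finite and flat over $S$ the cycle $[X(s)]$ equals the specialization of $[X(u)]$, applied to $W$ and $Y$, which are not Cartier in $C$ --- is precisely the nontrivial input the paper delegates to \cite{E3}, Prop.~3.4; it does hold, by the additivity of the order function over the minimal primes of a one-dimensional local ring, but you should cite or prove it, since it carries the entire cycle statement. In exchange, your colon-ideal description of $\I_W$ is more explicit and self-contained, whereas the paper's factored section $w$ is what makes Remark~\ref{rmk} (that the limit depends only on the map $\I(s)^{r+1}\to\J$) immediate.
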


\begin{proof} By Lemma \ref{lemIL} there is an injection 
$\varphi\:\I\hookrightarrow\L$ into an invertible sheaf $\L$ whose cokernel is 
$S$-flat. Let $Y\subset C$ such that 
$\varphi(\I)=\I_{Y/C}\L$. Then $Y$ is $S$-flat. 
As in the proof of Proposition~\ref{proplim}, consider the induced 
map $\varphi^{\ox r+1}\:\I^{\ox r+1}\to\L^{\ox r+1}$ and 
the $S$-flat closed subscheme 
$Z\subset C$ such that 
$\text{Im}(\varphi^{\ox r+1})\subseteq\I_{Z/C}\L^{\ox r+1}$, with equality holding 
over $u$. Then $Z(s)\subseteq Y^{r+1}_s$, where 
$\I_{Y^{r+1}_s/C(s)}=\I^{r+1}_{Y(s)/C(s)}$. Also, since $\I(u)$ is invertible, 
$Y(u)$ is Cartier and $Z(u)=(r+1)Y(u)$.

Fix a relative fundamental class $\eta\:\Omega^1_{C/S}\to\w_{C/S}$. 
Set $\epsilon\:\V\to\pi_*\I$, where $\V$ is a locally free 
sheaf of constant rank, say $r+1$. Consider the induced family of linear systems 
$(\L,\epsilon')$, where we set $\epsilon':=\pi_*\varphi\circ\epsilon$, and its 
associated Wronskian $w(\L,\epsilon')$. Since the restriction 
$w(\L,\epsilon')(u)$ 
factors through $w(\I(u),\epsilon_u)$, it follows that 
$w(\L,\epsilon')$ factors through a global section $w$ of 
$$
\I_{Z/C}\L^{\ox r+1}\ox\w_{C/S}^{\ox\binom{r+1}{2}}.
$$
Since $Z$ is flat over $S$, the above sheaf is a family of torsion-free, rank-1 
sheaves along $\pi$. So, since $w$ restricts to the Wronskian 
$w(\I(u),\epsilon_u)$, under the identification $\I\risom\I_{Y/C}\L$ given by 
$\varphi$, it follows that the zero scheme of $w$ is $W$. Thus
\begin{align*}
[W(s)]&=[W(\L(s),\epsilon'_s)]-[Z(s)]\\
&=[W(\L(s),\epsilon'_s)]-(r+1)[Y(s)]=R(\I(s),\epsilon_s),
\end{align*}
where the second equality follows from \cite{E3}, Prop.~3.4, using that 
$Z(u)=(r+1)Y(u)$.

Finally, as seen in Subsection \ref{jets}, 
the Wronskian $w(\L(s),\epsilon'_s)$ factors 
through a section $w'$ of 
$$
\I^{r+1}_{Y(s)/C(s)}\L(s)^{\ox r+1}\ox\w_{C(s)}^{\ox\binom{r+1}{2}},
$$
which is contained in 
$$
\I_{Z(s)/C(s)}\L(s)^{\ox r+1}\ox\w_{C(s)}^{\ox\binom{r+1}{2}}
$$
because $Z(s)\subseteq Y^{r+1}_s$. So $w(s)$ factors through $w'$, and thus the 
zero scheme of $w'$ is contained in that of $w(s)$. In other words, we have 
$W(s)\supseteq Z(\I(s),\epsilon_s)$ as claimed.
\end{proof}

\begin{remark}\label{rmk} In the proof of Theorem \ref{thm}, the 
intrinsic Weierstrass scheme is the zero scheme of a section $w'$ of 
$$
\I(s)^{r+1}\ox\w_{C(s)}^{\ox\binom{r+1}{2}}.
$$
As seen in the proof, $W(s)$ is the zero scheme of a section $w$ obtained 
by composing $w'$ with 
$$
\psi\ox 1\:\I(s)^{r+1}\ox\w_{C(s)}^{\ox\binom{r+1}{2}}\lra
\J\ox\w_{C(s)}^{\ox\binom{r+1}{2}},
$$
where $\psi\:\I(s)^{r+1}\to \J$ is an injection into a torsion-free, rank-1 sheaf 
$\J$ satisfying $\deg\J=(r+1)\deg\I(s)$. From the proof, 
$$
\J=\I_{Z(s)/C(s)}\L(s)^{\ox r+1}.
$$ 
It can be shown, using a relative version of Lemma \ref{inj2} that $\J$ and 
$\psi$ do not depend on the choice of $\varphi$, but only on the family $\I$. 
So $W(s)$ depends only on $\I$. As Example \ref{example} below shows, $W(s)$ 
does not depend only on $\I(s)$ and $\epsilon_s$. 
\end{remark}

\begin{example}\label{example} Let $C$ be an irreducible plane curve of degree $e$ 
defined over an algebraically closed field $k$ of characteristic zero. 
Let $d$ be a positive integer smaller than $e$. 
For each $P\in C$, let $V_P$ be the vector subspace of 
$\Gamma(C,\O_C(d))$ generated by the degree-$d$ plane curves passing through 
$P$. Then 
$$
V_P\subseteq\Gamma(C,\I_{P/C}(d)).
$$
Let $(\I_{P/C}(d),\epsilon_P)$ be the corresponding (generalized) linear 
system, where $\epsilon_P\:V_P\to \Gamma(C,\I_{P/C}(d))$ denotes 
the inclusion. This 
system is (strongly) nondegenerate because $d<e$. If $P$ is on the 
nonsingular locus of $C$, denote by $W(C,P)\subseteq C$ the Weierstrass scheme of 
$(\I_{P/C}(d),\epsilon_P)$. 

For a very simple example, let $C$ be the nodal cubic, given by the equation 
$f(x,y,z)=0$, where 
$$
f(x,y,z):=y^2z-x^2z-x^3.
$$
Assume $d=1$. Let $Q$ be the node of $C$, given by $x=y=0$. Then 
$V_Q$ is generated by $x$ and $y$. 

Consider the inclusion $\varphi\:\I_{Q/C}(1)\hookrightarrow\O_C(1)$. 
The Weierstrass divisor of $(\O_C(1),\epsilon'_Q)$, where 
$\epsilon'_Q:=\Gamma(\varphi)\epsilon_Q$, is supported at $Q$ and given there 
by the equation
$$
\begin{vmatrix}
x & y\\
\frac{\partial f}{\partial y} & -\frac{\partial f}{\partial x}
\end{vmatrix}=0,
$$
that is, by the equation $x^3=0$. The associated cycle is thus $6Q$. So 
$$
R(\I_{Q/C}(1),\epsilon_Q)=4Q.
$$

Let $u:=x/z$ and $v:=y/z$. Then $u$ and $v$ generate the maximal ideal of 
$\O_{C,Q}$. The intrinsic Weierstrass scheme of $(\I_{Q/C}(1),\epsilon_Q)$ is 
supported at $Q$ and given there by the transporter ideal $(u^3:(u,v)^2)$, 
which is equal to $(u,v)^2$. So 
$$
[Z(\I_{Q/C}(1),\epsilon_Q)]=3Q.
$$

So, if $(\I_{Q/C}(1),\epsilon_Q)$ is a limit of ``true'' linear systems, the 
limit of the corresponding Weierstrass divisors is the subscheme $W(a,b)$ of $C$ 
supported at $Q$ and whose ideal at $Q$ is of the form
$$
(auv+bv^2)\O_{C,Q}+(u,v)^3
$$
for certain $a,b\in k$, with $a$ or $b$ nonzero. 

Furthermore, all $a$ and $b$ are possible. Indeed, consider first the 
family of pointed curves $(C_t,P_t)$, where $P_t$ is 
given by $x=0$ and $y=tz$, and $C_t$ is given by
$$
y^2z-x^3-x^2z-t^2z^3=0.
$$
For $t$ close to zero, but nonzero, $P_t$ is 
a nonsingular point of $C_t$. The limit of $W(C_t,P_t)$ can be computed to 
be $W(1,0)$. 

Finally, consider another family of pointed curves $(C_t,P_t)$, where $P_t$ 
is given by $x=tz$ and $y=2ctz$, for a fixed element $c\in k$, and 
where $C_t$ is given by
$$
y^2z+(t-1)x^2z-x^3-2ctyz^2+t^2z^3=0.
$$
Again, for $t$ close to zero, but nonzero, $P_t$ is 
a nonsingular point of $C_t$. The limit of $W(C_t,P_t)$ can be computed to 
be $W(c,1)$. 

The computations were done using \cocoa \cite{cocoa}.
\end{example}

\end{document}